\theoremstyle{plain}
\newtheorem{theorem}[subsection]{{\bf Theorem}}
\newtheorem*{theorem*}{{\bf Theorem}}
\newtheorem{corollary}[subsection]{{\bf Corollary}}
\newtheorem*{corollary*}{{\bf Corollary}}
\newtheorem{proposition}[subsection]{{\bf Proposition}}
\newtheorem{lemma}[subsection]{{\bf Lemma}}
\theoremstyle{definition}
\theoremstyle{remark}
\numberwithin{equation}{subsection}
\def\Sym{\mathrm{Sym}}
\newcommand{\PSL}{\mathrm{PSL}}
\newcommand{\SL}{\mathrm{SL}}
\newcommand{\PGL}{\mathrm{PGL}}
\newcommand{\GF}{\mathrm{GF}}
\newcommand{\GL}{\mathrm{GL}}
\def \udot {{}^{\textstyle .}}
\def\X{\mathfrak X}
\newcommand{\Q}{\mathrm{Q}}
\newcommand{\Aut}{\mathrm{Aut}}
\newcommand{\Inn}{\mathrm{Inn}}
\newcommand{\Syl}{\mathrm{Syl}}
\newcommand{\Alt}{\mathrm{Alt}}
\newcommand{\Dih}{\mathrm{Dih}}
\def \Z {\mathbb Z}
\def \Aut{ \mathrm {Aut}}
\def \Mat{\mbox {\rm Mat}}
\def \GL{\mathrm {GL}}
\journal{Journal of Pure and Applied Algebra}
\begin{document}

\begin{frontmatter}

\title{Locally finite groups in which every non-cyclic subgroup is self-centralizing}

\author[1]{Costantino Delizia\corref{mycorrespondingauthor}}
\cortext[mycorrespondingauthor]{Corresponding author}
\ead{cdelizia@unisa.it}

\author[2]{Urban Jezernik}
\ead{urban.jezernik@imfm.si}

\author[3]{Primo\v z Moravec}
\ead{primoz.moravec@fmf.uni-lj.si}

\author[1]{Chiara Nicotera}
\ead{cnicoter@unisa.it}

\author[4]{Chris Parker}
\ead{c.w.parker@bham.ac.uk}

\address[1]{University of Salerno, Italy}
\address[2]{Institute of Mathematics, Physics, and Mechanics, Ljubljana, Slovenia}
\address[3]{University of Ljubljana, Slovenia}
\address[4]{University of Birmingham, United Kingdom}

\begin{abstract}
Locally finite groups having the property that every non-cyclic subgroup contains its centralizer are completely classified.
\end{abstract}

\begin{keyword}
Self-centralizing subgroup\sep Frobenius group\sep locally finite group
\MSC[2010] 20F50\sep  20E34\sep 20D25
\end{keyword}

\end{frontmatter}

%\linenumbers

%%%%%%%%%%%%%%%%%%%%%%%%%%%%%%%%%%%%%%%%%%%%%%%%%%%%%%%%%%%%%%%%%%%%%%%
\section{Introduction}
\label{s:intro}
%%%%%%%%%%%%%%%%%%%%%%%%%%%%%%%%%%%%%%%%%%%%%%%%%%%%%%%%%%%%%%%%%%%%%%%

A subgroup $H$ of a group $G$ is {\it self-centralizing} if the centralizer $C_G(H)$ is contained in $H$. In \cite{Del13} it has been remarked that a locally graded group in which all non-trivial subgroups are self-centralizing has to be finite; therefore it has to be either cyclic of prime order or non-abelian of order being the product of two different primes.

In this article, we consider the more extensive class $\X$ of all groups in which every non-cyclic subgroup is self-centralizing. In what follows we   use the term $\X$-groups in order to denote groups in the class $\X$. The study of properties of $\X$-groups was initiated in \cite{Del13}. In particular, the first four authors determined the structure of finite $\X$-groups which are either nilpotent, supersoluble or simple.

In this paper, Theorem~\ref{one} gives a complete classification of finite $\X$-groups. We remark that this result does not depend on classification of the finite simple groups rather only on the classification of groups with dihedral or semidihedral Sylow $2$-subgroups.
We  also determine the infinite soluble $\X$-groups, and the infinite locally finite $\X$-groups, the results being presented in  Theorems~\ref{t:soluble} and \ref{t:locallyfinite}. It turns out that these latter groups are suitable finite extensions either of the infinite cyclic group or of a Pr\"ufer $p$-group, $\Z_{p^\infty}$, for some prime $p$. Theorem~\ref{t:locallyfinite} together with Theorem~\ref{one} provides a complete classification of locally finite $\X$-groups.

We follow \cite{Asc93} for basic group theoretical notation. In particular, we note that $F^*(G)$ denotes the generalized Fitting subgroup of $G$, that is the subgroup of $G$ generated by all subnormal nilpotent or quasisimple subgroups of $G$. The latter subgroups are the components of $G$. We see from  \cite[Section 31]{Asc93} that distinct components commute.
The fundamental property of the generalized Fitting subgroup  that we shall use is that it contains its centralizer in $G$ \cite[(31.13)]{Asc93}.  We denote the alternating group and symmetric group of degree $n$ by $\Alt(n)$ and $\Sym(n)$ respectively. We use standard notation for the classical groups. The notation $\Dih(n)$ denotes the dihedral group of order $n$ and $\Q_8$ is the quaternion group of order $8$. The term quaternion group will cover groups  which  are often called generalized quaternion groups. The cyclic group of order $n$ is represented simply by $n$, so for example $\Dih(12) \cong 2 \times \Dih(6)\cong 2 \times \Sym(3)$.
Finally $\Mat(10)$ denotes the Mathieu group of degree 10. The  Atlas \cite{Con85} conventions are  used for group extensions. Thus, for example, $p^2{:}\SL_2(p)$ denotes the split extension of an elementary abelian group of order $p^2$ by $\SL_2(p)$.

\section*{ Acknowledgment}
We would like to thank Professor Hermann Heineken  for pointing out an oversight in the statement of (2.1.3) of Theorem~\ref{one}. We also thank the referee for valuable suggestions.
%%%%%%%%%%%%%%%%%%%%%%%%%%%%%%%%%%%%%%%%%%%%%%%%%%%%%%%%%%%%%%%%%%%%%%%
\section{Finite $\X$-groups}
\label{s:finite}
%%%%%%%%%%%%%%%%%%%%%%%%%%%%%%%%%%%%%%%%%%%%%%%%%%%%%%%%%%%%%%%%%%%%%%%

In this section we determine all the finite groups belonging to the class $\X$. The main result is the following.
\begin{theorem}\label{one}
Let $G$ be a finite $\X$-group.  Then one of the following holds:
\begin{enumerate}
\item[$(1)$] If $G$ is nilpotent, then either
\begin{enumerate}
\item[$(1.1)$] $G$ is cyclic;
\item[$(1.2)$] $G$ is elementary abelian of order $p^2$ for some prime $p$;
\item[$(1.3)$]  $G$ is an extraspecial $p$-group of order $p^3$ for some odd prime $p$; or
\item[$(1.4)$] $G$ is a dihedral, semidihedral or quaternion $2$-group.
\end{enumerate}
\item[$(2)$] If $G$ is supersoluble but not nilpotent, then, letting $p$ denote the largest prime divisor of $|G|$ and $P \in \Syl_p(G)$, we have that $P$ is a  normal subgroup of $G$ and  one of the following holds:
\begin{enumerate}
\item[$(2.1)$] $P$ is cyclic and either
\begin{enumerate}
\item[$(2.1.1)$] $G \cong  D \ltimes C$, where $C$ is cyclic,  $D$ is cyclic and every non-trivial element of $D$ acts fixed point
freely on $C$ (so $G$ is a Frobenius group);
\item[$(2.1.2)$]  $G = D\ltimes C$, where $C$ is a cyclic group of odd order, $D$ is a
quaternion group, and $C_G(C) = C\times D_0$ where $D_0$ is a cyclic subgroup of index $2$ in $D$ with $G/D_0$  a dihedral group; or
\item[$(2.1.3)$] $ G = D \ltimes C$, where $D$ is a cyclic $q$-group, $C$ is a cyclic $q'$-group
(here $q$ denotes the smallest prime dividing the order of $G$),  $1 < Z(G) < D$ and $G/Z(G)$ is a Frobenius group;
\end{enumerate}
\item[$(2.2)$] $P$ is extraspecial and $G$ is a Frobenius group with cyclic Frobenius complement of odd order dividing $p-1$.
\end{enumerate}
\item[$(3)$] If $G$ is not supersoluble and $F^*(G)$ is nilpotent, then either $(3.1)$ or $(3.2)$ below holds.
\begin{enumerate}
\item[$(3.1)$]  $F^*(G)$ is elementary abelian  of order $p^2$, $F^*(G)$ is a minimal normal subgroup of $G$ and one of the following holds:
\begin{enumerate}
\item[$(3.1.1)$] $p=2$ and $G \cong \Sym(4)$ or $G \cong \Alt(4)$;  or
\item[$(3.1.2)$] $p$ is odd and
$G= G_0\ltimes N$ is a Frobenius group  with Frobenius kernel $N$ and Frobenius complement $G_0$ which is itself  an $\X$-group. Furthermore, either
\begin{itemize}
\item[$(3.1.2.1)$] $G_0$ is cyclic of order dividing $p^2-1$ but not dividing $p-1$;
\item[$(3.1.2.2)$] $G_0$ is quaternion;
\item[$(3.1.2.3)$] $G_0$ is supersoluble as in $(2.1.2)$ with $|C|$ dividing $p-\epsilon $  where  $p\equiv \epsilon \pmod 4$;
\item[$(3.1.2.4)$] $G_0$ is supersoluble as in $(2.1.3)$ with $D$ a $2$-group, $C_D(C)$ a non-trivial maximal subgroup of $D$ and $ |C|$ odd dividing $p-1$ or $p+1$;
\item[$(3.1.2.5)$] $G_0 \cong \SL_2(3)$;
\item[$(3.1.2.6)$]  $G_0\cong \SL_2(3)\udot 2$ and  $p \equiv \pm 1 \pmod 8$; or
\item[$(3.1.2.7)$] $G_0 \cong \SL_2(5) $ and $60$ divides $p^2-1$.
\end{itemize}
\end{enumerate}
\item[$(3.2)$] $F^*(G)$ is extraspecial of order $p^3$ and one of the following holds:
\begin{enumerate}
\item[$(3.2.1)$] $G \cong \SL_2(3)$ or $G \cong \SL_2(3)\udot 2$ (with quaternion Sylow $2$-subgroups of order $16$); or
\item[$(3.2.2)$] $G= K\ltimes N$ where $N$ is extraspecial of order $p^3$  and exponent $p$  with $p$ an odd prime, $K$ centralizes $Z(N)$ and is cyclic of odd order dividing $p+1$. Furthermore, $G/Z(N)$ is a Frobenius group.
\end{enumerate}
\end{enumerate}
\item[$(4)$] If $F^*(G)$ is not nilpotent, then either
\begin{enumerate}
\item[$(4.1)$] $F^*(G) \cong \SL_2(p)$ where $p$ is a Fermat prime, $|G/F^*(G)|\le 2$ and $G$ has quaternion Sylow $2$-subgroups; or
\item[$(4.2)$] $G \cong \PSL_2(9)$, $\Mat(10)$ or $\PSL_2(p)$ where $p$ is a Fermat or Mersenne prime.
\end{enumerate}
\end{enumerate}
Furthermore, all the groups listed above are $\X$-groups.
\end{theorem}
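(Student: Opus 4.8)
The plan is to classify $G$ via the structure of its generalized Fitting subgroup, relying throughout on two observations. First, $\X$ is closed under passing to subgroups: if $K\le H\le G$ and $K$ is non-cyclic, then $C_H(K)=C_G(K)\cap H\le K$. Second, $C_G(F^*(G))\le F^*(G)$ forces $C_G(F^*(G))=Z(F^*(G))$, so $F^*(G)$ is a self-centralizing $\X$-subgroup and $G/Z(F^*(G))$ embeds in $\Aut(F^*(G))$. Before the main case division I would record the structural constraints that do most of the work: a finite non-cyclic abelian group having no proper non-cyclic subgroup is elementary abelian of rank $2$, so in an $\X$-group every abelian subgroup is either cyclic or isomorphic to $C_p\times C_p$ (and in the latter case is maximal abelian); consequently Sylow subgroups have $p$-rank at most $2$, and a Sylow $2$-subgroup is cyclic, dihedral, semidihedral or (generalized) quaternion, these being exactly the $2$-groups all of whose abelian subgroups are cyclic or Klein four. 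This last point is what lets the classification rest on the Gorenstein--Walter and Alperin--Brauer--Gorenstein theorems rather than on the classification of finite simple groups. Case (1) itself is the classification of nilpotent $\X$-groups from \cite{Del13}.

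Suppose first $F^*(G)=F(G)$ is nilpotent, hence one of the groups of (1), and run through the possibilities. If $F(G)$ is cyclic, then $G/F(G)\hookrightarrow\Aut(F(G))$ is abelian, so $G$ is metabelian with $G'$ cyclic and therefore supersoluble; analyzing the faithful action of $G/Z(F(G))$ on $F(G)$ — with the constraint on Sylow $2$-subgroups and the structure of a quaternion group acting on a cyclic group — gives the sub-cases of (2.1), while a normal extraspecial Sylow subgroup at the largest prime gives (2.2). If $F(G)\cong C_p\times C_p$ then $G/F(G)\hookrightarrow\GL_2(p)$, and one separates the reducible action (where $F(G)$ is not minimal normal, the image lies in the upper triangular group, and $G$ is again supersoluble) from the irreducible one. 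In the irreducible case the crucial step, valid for $p$ odd, is that the action is fixed-point-free: from any hypothetical $1\ne q$ of prime order $\ell\neq p$ fixing some $0\ne v\in N:=F(G)$ one builds, for $0\ne w\in[N,q]$, a non-cyclic subgroup $\langle w,q\rangle$ (dihedral of order $2p$ if $\ell=2$, a Frobenius group of order $p\ell$ otherwise) with $v\in C_G(\langle w,q\rangle)\setminus\langle w,q\rangle$, contradicting $\X$. Thus $G$ is a Frobenius group whose complement $G_0$ is an $\X$-group; running through Zassenhaus's structure of Frobenius complements — metacyclic complements, quaternion complements, the two further supersoluble types of (2.1.2) and (2.1.3), and the complements involving $\SL_2(3)$, $\SL_2(3)\udot 2$ and $\SL_2(5)$ — and keeping exactly those satisfying $\X$ and admitting a fixed-point-free $2$-dimensional representation over $\F_p$ produces (3.1.2.1)--(3.1.2.7), the various divisibility conditions recording which roots of unity must lie in $\F_p$ or $\F_{p^2}$; the case $p=2$ is small ($\GL_2(2)\cong\Sym(3)$) and gives (3.1.1). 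Finally, if $F(G)$ is non-abelian it is extraspecial of order $p^3$, yielding (3.2) after inspecting the automorphisms of an extraspecial $p$-group and the $\X$-condition on the complementing group (with $\SL_2(3)$, $\SL_2(3)\udot 2$ appearing in the order-$8$ quaternion case, and a dihedral or semidihedral $F(G)$ forcing $G$ to be a $2$-group already in (1)).

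If $F^*(G)$ is not nilpotent it has a component, and the rank bound forces a single component $L$, a quasisimple $\X$-group with cyclic, dihedral, semidihedral or quaternion Sylow $2$-subgroup. The Gorenstein--Walter, Alperin--Brauer--Gorenstein and Brauer--Suzuki theorems (the last excluding quaternion Sylow $2$-subgroups in nonabelian simple groups) together with Burnside transfer cut $L/Z(L)$ down to a short list of groups $\PSL_2(q)$ plus a few small exceptions; imposing the full $\X$-condition — typically by exhibiting a copy of $\Sym(3)$ inside a dihedral subgroup $\Dih(p\pm1)$ of $\PSL_2(p)$ with centralizer strictly larger than itself, which occurs precisely when $12\mid p\pm 1$ — leaves $L\cong\SL_2(p)$ with $p$ a Fermat prime, $L/Z(L)\cong\PSL_2(p)$ with $p$ a Fermat or Mersenne prime, or $L\cong\PSL_2(9)$. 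Since $G$ embeds in $\Aut(L)$ over $L=F^*(G)$, checking which outer extensions keep $G$ in $\X$ gives (4.1) and (4.2), with $\PSL_2(9)\udot 2=\Mat(10)$ the surviving almost simple non-simple example. For the converse one verifies directly that each listed group lies in $\X$: by subgroup-lattice inspection for the nilpotent and small exceptional groups; for the Frobenius groups in (2) and (3) by using that a subgroup of a Frobenius group lies in the kernel, or is conjugate into a complement, or is Frobenius, so that — the kernels being cyclic or $\cong C_p\times C_p$, hence without proper non-cyclic subgroups — the centralizer of any non-cyclic subgroup is at once inside it; and for (4) by Dickson's subgroup lists for $\PSL_2$ and $\SL_2$.

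The hardest part is the case $F(G)\cong C_p\times C_p$: classifying the finite $\X$-groups that occur as Frobenius complements in $\GL_2(p)$ (together with the reducible, supersoluble configurations) means combining Zassenhaus's theory with the $\X$-condition and then pinning down exactly when each candidate complement embeds fixed-point-freely into $\GL_2(p)$, which is both technically involved and the source of the subtlety in the statement of (2.1.3). A close second is the non-nilpotent-$F^*(G)$ case: although the deep classification input is quotable, the group-specific bookkeeping required to rule out every $\PSL_2(q)$ with $q$ not a Fermat or Mersenne prime (and $q\ne 9$) — and to confirm that the survivors really satisfy $\X$ — is substantial.
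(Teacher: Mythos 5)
Your overall route is essentially the paper's: reduce via $F^*(G)$ using $C_G(F^*(G))\le F^*(G)$ and subgroup-closure of $\X$, split into the cases $F^*(G)\cong C_p\times C_p$, extraspecial of order $p^3$, and quasisimple, invoke Zassenhaus for the Frobenius complements in $\GL_2(p)$ and the dihedral/semidihedral Sylow $2$-subgroup classification for the quasisimple case, and verify the examples by Frobenius-type arguments (the paper, like you, imports parts (1), (2) and the simple case from \cite{Del13}). There is, however, a genuine gap in the central case $N=F^*(G)\cong C_p\times C_p$ with $p$ odd and $G/N$ irreducible. Your fixed-point-freeness argument treats only elements of prime order $\ell\ne p$, and you then conclude ``thus $G$ is a Frobenius group with kernel $N$.'' That conclusion also requires $N\in\Syl_p(G)$: an element $x$ of order $p$ in $G\setminus N$ always centralizes a non-trivial subgroup of $N$, and $N\langle x\rangle$ is extraspecial of order $p^3$, itself a perfectly good $\X$-group, so your construction produces no contradiction against such elements. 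The paper devotes a separate argument to exactly this point: if $p$ divides $|G/N|$, a Sylow $p$-subgroup is extraspecial of order $p^3$ and non-normal, and then (via \cite[Theorem 2.8.4]{Gor68}; alternatively, an irreducible subgroup of $\GL_2(p)$ of order divisible by $p$ contains $\SL_2(p)$) one finds a subgroup $p^2{:}\SL_2(p)$ in which a subgroup of order $p$ centralizes a dihedral group $Z\langle t\rangle$ of order $2p$ without being contained in it, contradicting $\X$. Without this step the Frobenius structure, and hence the whole reduction to Zassenhaus complements, is unjustified.

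A second concrete flaw is your criterion in the simple case: exhibiting $\Sym(3)$ inside $\Dih(p\pm1)$ ``precisely when $12\mid p\pm1$'' does not eliminate all non-Fermat, non-Mersenne primes. For example $p=19$ has $12\nmid 18$ and $12\nmid 20$, yet $\PSL_2(19)$ is not an $\X$-group: it contains $\Dih(20)\cong 2\times\Dih(10)$ (the very subgroup the paper uses to exclude $\PGL_2(9)$), whose non-cyclic subgroup $\Dih(10)$ fails to contain the central involution of $\Dih(20)$; similarly $p=29$ slips through your test. The correct obstruction is a subgroup $2\times\Dih(2r)$ for some odd prime $r$ with $4r$ dividing $p-1$ or $p+1$; equivalently, whichever of $p\pm1$ is divisible by $4$ must be a power of $2$, which is exactly what forces $p$ to be Fermat or Mersenne. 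Your final list is the right one, but the test you propose would not establish it; note the paper avoids this entirely by quoting the classification of simple $\X$-groups from \cite[Theorem 3.7]{Del13}. The remaining deviations from the paper (re-deriving part (2) instead of citing it, the brief treatment of the quasisimple-but-not-simple extensions) are matters of detail rather than substance.
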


We make a brief remark about the group $\SL_2(3)\udot 2$ and the groups appearing in part $(4.1)$ of Theorem~\ref{one} in the case $G > F^*(G)$. To  obtain such groups, take $F = \SL_2(p^2)$, then the groups in question are isomorphic to the normalizer in $F$ of the subgroup isomorphic to $\SL_2(p)$.  We denote these groups by $\SL_2(p)\udot 2$ to indicate that the extension is not split (there are no elements of order $2$ in the outer half of the group).

We shall repeatedly use the fact that if $L$ is a subgroup of an $\X$-group $X$, then $L$ is an $\X$-group. Indeed, if $H \le L$ is non-cyclic, then $C_L(H) \le C_X(H) \le H$.

The following elementary facts will facilitate our proof that the examples listed are indeed $\X$-groups.

 \begin{lemma}\label{l:one}
  The finite group $X$ is an $\X$-group if and only if $C_X(x)$ is an $\X$-group for all  $x \in X$ of prime order.
 \end{lemma}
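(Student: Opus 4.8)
The proof plan is to establish both implications of the biconditional separately, with the forward direction being essentially trivial and the reverse direction carrying all the content.

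\medskip

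\noindent\textbf{The easy direction.} Suppose $X$ is an $\X$-group. For any $x \in X$ of prime order, $C_X(x)$ is a subgroup of $X$, hence is itself an $\X$-group by the hereditary property noted above (subgroups of $\X$-groups are $\X$-groups). So this direction requires no work beyond citing that remark.

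\medskip

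\noindent\textbf{The substantive direction.} Now suppose $C_X(x)$ is an $\X$-group for every element $x \in X$ of prime order; we must show $X$ is an $\X$-group. Let $H \le X$ be a non-cyclic subgroup; we must show $C_X(H) \le H$. Suppose not, and pick $y \in C_X(H) \setminus H$. The idea is to replace $y$ by an element of prime order that still centralizes enough. First I would reduce to the case where $y$ has prime order: since $\la y \ra$ centralizes $H$, some power $x = y^k$ has prime order $p$, and $x$ still centralizes $H$, so $x \in C_X(H)$. If $x \notin H$ we are immediately in the situation of a prime-order element centralizing $H$ with $x \notin H$; if instead every prime-order power of $y$ lies in $H$, one argues that then $\la y \ra \cap H$ contains the (unique, if $y$ is a $q$-element, or each) minimal subgroup, and a short argument handles the general case — alternatively, observe that if $y \notin H$ then some element of $\la y, H\ra$ of prime order lies outside $H$ while centralizing $H$, because $H \trianglelefteq \la y\ra H$ and $\la y \ra H / H$ is a nontrivial cyclic group. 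The cleanest route: choose $x$ of prime order $p$ in $C_X(H)$ with $x \notin H$, which is possible whenever $C_X(H) \not\le H$ (take any $z \in C_X(H)\setminus H$ of minimal order; if $z$ does not have prime order, $z^p \in H$ for the relevant prime forces a contradiction with minimality after passing to $\la z\ra H$).

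\medskip

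\noindent\textbf{Finishing.} Having produced $x \in C_X(H)$ of prime order $p$ with $x \notin H$, set $K = C_X(x)$. Then $H \le K$ since $x$ centralizes $H$, and also $x \in K$. Now $\la H, x\ra \le K$ is non-cyclic: indeed $H$ is already non-cyclic, so $\la H, x\ra$ is non-cyclic. By hypothesis $K$ is an $\X$-group, so $C_K(\la H, x\ra) \le \la H, x\ra$. But $x \in C_K(\la H, x\ra)$? No — we want a contradiction showing $x$ should have been in $H$. Instead, apply the $\X$-property of $K$ to the non-cyclic subgroup $H$ itself: $C_K(H) \le H$. Since $x \in K$ and $x$ centralizes $H$, we get $x \in C_K(H) \le H$, contradicting $x \notin H$. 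Therefore no such $x$ exists, $C_X(H) \le H$, and $X$ is an $\X$-group. The only delicate point — the main obstacle — is the reduction to a prime-order centralizing element lying outside $H$; once that is in hand, the rest is a one-line application of the hypothesis to the subgroup $C_X(x)$.
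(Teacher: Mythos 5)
Your finishing step is exactly right, but the reduction you lean on --- ``whenever $C_X(H)\not\le H$ one can choose $x\in C_X(H)\setminus H$ of \emph{prime} order'' --- is a genuine gap: as a purely group-theoretic claim it is false. Take $X=\la a\ra\times\la b\ra\times\la c\ra\cong \Z/4\times\Z/2\times\Z/2$ and $H=\la a^2,b,c\ra$. Then $H$ is non-cyclic, $C_X(H)=X\not\le H$, yet every element of prime order in $C_X(H)$ lies in $H$ (they all lie in $\Omega_1(X)=H$). This also kills both of your sketched justifications: an element of prime order in $\la y\ra H\setminus H$ need not exist and, even when it does, it has the form $y^ih$ and centralizes $H$ only if $h\in Z(H)$, so normality of $H$ in $\la y\ra H$ buys you nothing; and in the minimal-order argument the conclusion $z^p\in H$ produces no contradiction (in the example above $z=a$, $z^2\in H$, and nothing further happens). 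So the reduction cannot be carried out before invoking the hypothesis, and as written your proof does not close.

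The repair is small and is precisely what the paper does: drop the reduction altogether. If $x\in C_X(H)$ is any non-trivial element, choose $k$ with $x^k$ of prime order; then $C_X(x)\le C_X(x^k)$, and since $C_X(x^k)$ is an $\X$-group by hypothesis and $\X$ is subgroup closed, $C_X(x)$ is an $\X$-group as well. Now $H\le C_X(x)$ because $x$ centralizes $H$, and $H$ is non-cyclic, so $x\in C_{C_X(x)}(H)\le H$. Thus every non-trivial element of $C_X(H)$ already lies in $H$, which is the desired conclusion; no prime-order element outside $H$ is ever needed. Your forward direction and your final application of the $\X$-property of a centralizer to $H$ are the same as the paper's; only the intermediate reduction needs to be replaced by this observation.
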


\begin{proof} If $X$ is an $\X$-group, then, as $\X$ is subgroup closed, $C_X(x)$ is an $\X$-group for all $x \in X$ of prime order.  Conversely, assume that   $C_X(x)$ is an $\X$-group for all $x \in X$ of prime order (and hence of any order). Let $H \le X$ be non-cyclic. We shall show $C_X(H) \le H$.  If $C_X(H)=1$, then $C_X(H) \le H$ and we are done.  So assume  $x \in C_X(H)$ and $x \ne 1$. Then $H \le C_X(x)$ which is an $\X$-group. Hence $x \in C_{C_X(x)}(H) \le H$.  Therefore $C_X(H) \le H$, and   $X$ is an $\X$-group.
\end{proof}

\begin{lemma}\label{l:two} Suppose that $X$ is a Frobenius group with kernel $K$ and complement $L$. If $K$ and $L$ are $\X$-groups, then $X$ is an $\X$-group.
\end{lemma}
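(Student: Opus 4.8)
The plan is to use Lemma~\ref{l:one} and the structure theory of Frobenius groups. By Lemma~\ref{l:one}, it suffices to show that $C_X(x)$ is an $\X$-group for every element $x \in X$ of prime order. In a Frobenius group with kernel $K$ and complement $L$, every element lies either in $K$ or in one of the conjugates of $L$, and these conjugates are themselves Frobenius complements; since $\X$ is subgroup closed and conjugation-invariant, a conjugate of $L$ is an $\X$-group whenever $L$ is. So there are two cases to treat: $x \in K$ and $x \in L^g$ for some $g$.

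If $x \in K$ has prime order, then $C_X(x) \le K$: indeed $C_K(x) \le C_X(x)$, and if some element of a complement centralized $x$ it would fix the non-trivial element $x$ of the kernel, contradicting the Frobenius action. Hence $C_X(x) = C_K(x) \le K$, which is an $\X$-group by hypothesis, so $C_X(x)$ is an $\X$-group. If instead $x$ has prime order and lies in a complement, we may assume after conjugation that $x \in L$. Here I would recall the standard fact that in a Frobenius group the centralizer of a non-trivial element of the complement is contained in that complement: $C_X(x) \le L$. (If $y \in C_X(x)$ with $y \notin L$, write $y = k\ell$ with $k \in K \setminus \{1\}$ in the natural way and derive that $\ell^{-1} x \ell$ and $x$ both centralize $k$ while lying in complements fixing the point corresponding to $k$, forcing $\ell^{-1}x\ell = x$; then $k = yx y^{-1} x^{-1}$ centralizes $x$, so $k \in C_K(x)$, but $x$ acts fixed-point-freely on $K$, giving $k = 1$, a contradiction.) Thus $C_X(x) \le L$, which is an $\X$-group by hypothesis, so again $C_X(x)$ is an $\X$-group.

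Combining the two cases, $C_X(x)$ is an $\X$-group for every $x \in X$ of prime order, and Lemma~\ref{l:one} yields that $X$ is an $\X$-group. The main point requiring care — though it is classical — is the second case: verifying that the centralizer of a prime-order element of the complement really does lie inside the complement, rather than merely containing it. This rests entirely on the defining fixed-point-free action of the complement on the kernel, so I would either cite it as a standard property of Frobenius groups or include the short argument sketched above.
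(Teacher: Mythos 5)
Your proposal is correct and follows essentially the same route as the paper: reduce via Lemma~\ref{l:one} to centralizers of prime-order elements, then use the Frobenius partition to place each such centralizer inside $K$ or inside a conjugate of $L$, both of which are $\X$-groups. The only difference is that you spell out the classical fact that the centralizer of a non-trivial complement element lies in that complement, which the paper simply invokes as a standard property of Frobenius groups.
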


\begin{proof} Let $x \in X$ have prime order.  Then, as $K$ and $L$ have coprime orders, $x \in K$ or $x$ is conjugate to an element of $L$. But then, since $X$ is a Frobenius group,  either $C_X(x) \le K$ or $C_X(x)$ is conjugate to a subgroup of $L$. Since $K$ and $L$ are $\X$-groups, $C_X(x)$ is an $\X$-group. Hence $X$ is an $\X$-group by Lemma~\ref{l:one}.\end{proof}

The rest of this section is dedicated to the proof of Theorem~\ref{one}; therefore   $G$ always denotes a finite $\X$-group.  Parts (1) and (2) of  Theorem~\ref{one} are already proved in \cite[Theorems 2.2, 2.4, 3.2 and 3.4]{Del13}. However, our statement in (2.1.3) adds further detail which we now explain. So, for a moment, assume that $G$ is supersoluble, $q$ is the smallest prime dividing $|G|$, $D $ is a cyclic $q$-group and $C$ is a cyclic $q'$-group.  In addition,  $1\ne Z(G)= C_D(C)$.  Assume that   $d \in D \setminus Z(G)$. Then, as $d \not \in Z(G)$, $C$ is not centralized by $d$. By coprime action,  $C= [C,d] \times C_C(d)$ and so $Y=[C,d]\langle d \rangle $ is centralized by $C_C(d)$.  As $Y $ is non-abelian and $C_C(d) \cap Y= 1$, we deduce that $C_C(d)=1$.  Hence $G/Z(G)$ is a Frobenius group.
This means that  we can assume that (1) and (2) hold and, in particular, we  assume that $G$ is not supersoluble.

The following lemma provides the basic case subdivision of our proof.

\begin{lemma}\label{basic1} One of the following holds:
\begin{enumerate}
\item $F^*(G)$ is elementary abelian of order $p^2$ for some prime $p$.
\item $F^*(G)$ is extraspecial of order $p^3$ for some prime $p$.
\item $F^*(G)$ is quasisimple.
\end{enumerate}
\end{lemma}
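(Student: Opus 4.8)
The setup: $G$ is a finite $\X$-group, and from the preceding discussion we may assume $G$ is neither nilpotent nor supersoluble. We also know $F^*(G) \ge C_G(F^*(G))$, so $F^*(G)$ is non-trivial. The plan is to analyze $F^*(G) = F(G)E(G)$ by separately understanding $F(G)$ and $E(G)$, using the key constraint that every non-cyclic subgroup of $G$ is self-centralizing, applied heavily to subgroups of $F^*(G)$ and their centralizers.

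First I would dispose of the case where $F(G)$ (the Fitting subgroup) is non-trivial by analyzing it prime by prime. For a prime $p$ dividing $|F(G)|$, let $P = O_p(G)$. A characteristic abelian subgroup $A$ of $P$ that is non-cyclic would be self-centralizing, hence $C_G(A) \le A$; but $P \le C_G(A)$ forces $A = P$ abelian. So either $P$ is cyclic or $P$ has a unique characteristic abelian subgroup which is cyclic — the standard consequence is that $P$ is cyclic, or $p=2$ and $P$ is dihedral/semidihedral/quaternion, or $P$ is extraspecial (this is exactly the classification underlying part (1) of Theorem \ref{one}, via the P. Hall classification of $p$-groups whose characteristic abelian subgroups are cyclic). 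The crucial extra leverage: if $P$ is cyclic, then $C_G(P)$ contains $P$ and is normal, and one shows $C_G(P)$ acts on too much; the real point is to show that if $F(G)$ has two distinct prime divisors $p, q$, or is cyclic of composite prime-power exponent behaviour, one can build a non-cyclic subgroup of $F^*(G)$ whose centralizer escapes it. Concretely, if $O_p(G)$ and $O_q(G)$ are both non-trivial, pick elements $x,y$ of order $p,q$ inside $Z(O_p(G)), Z(O_q(G))$; then $\langle x,y\rangle$ is cyclic of order $pq$, but if say $O_p(G)$ is non-cyclic we instead take a non-cyclic subgroup and get a contradiction with self-centralization since $O_q(G)$ centralizes it. Pushing this, $F(G)$ must be a $p$-group, and then the characteristic-abelian analysis forces $F(G)$ elementary abelian of rank $2$ or extraspecial (the cyclic and dihedral/quaternion cases being absorbed into parts (1),(2) already handled, or feeding into case (3) where $F^*(G)$ is as claimed).

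Next I would handle $E(G)$, the product of the components. Since distinct components commute, if there are two components $L_1, L_2$, then $L_1$ centralizes $L_2$; but $L_2$ is non-abelian hence non-cyclic, so $C_G(L_2) \le L_2$, contradicting $L_1 \le C_G(L_2)$ and $L_1 \cap L_2 = 1$. Hence there is at most one component. If there is a component $L$ and also $F(G) \ne 1$, then $F(G)$ centralizes $L$; since $L$ is non-cyclic this gives $F(G) \le C_G(L) \le L$, forcing $F(G) \le Z(L)$, so $F(G)$ is cyclic (as $Z(L)$ of a quasisimple group is cyclic) — but also $L \le C_G(F(G))$, and if $F(G) \ne 1$ this is consistent only in degenerate ways; one argues $F(G) \le Z(L) \le L = E(G) = F^*(G)$, so actually $F^*(G) = L$ is quasisimple. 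Thus either $F^*(G)$ is a $p$-group (no components) or $F^*(G)$ is quasisimple (one component, trivial Fitting up to the center), giving case (3)'s alternatives $F^*(G)$ elementary abelian $p^2$ or extraspecial $p^3$, or case (iii) $F^*(G)$ quasisimple.

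The main obstacle I expect is the $p$-group case: ruling out all other possibilities for a nilpotent $F^*(G)$ and pinning it down to exactly ``elementary abelian of order $p^2$'' or ``extraspecial of order $p^3$''. The cyclic case and the dihedral/semidihedral/quaternion $2$-group cases are genuine $\X$-groups but they belong to parts (1) and (2) of Theorem \ref{one} — so the real work is showing that under the standing assumption ``$G$ not nilpotent, not supersoluble'', a cyclic or dihedral-type $F^*(G)$ cannot occur, i.e. forces $G$ to be supersoluble. For $F^*(G)$ cyclic this is essentially the argument already run in the excerpt's paragraph about (2.1.3) combined with the fact that $G/C_G(F^*(G))$ embeds in the abelian group $\Aut(F^*(G))$; for $F^*(G)$ dihedral/semidihedral one shows $\Aut$ is a $2$-group so $G$ is a $2$-group, contradicting non-nilpotence. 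That reduction, together with the characteristic-abelian-subgroup classification of $p$-groups, yields exactly the three listed options.
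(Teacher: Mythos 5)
Your overall strategy is essentially the paper's: in the non-nilpotent case a component $K$ is non-abelian, hence non-cyclic, so the $\X$-property gives $C_G(K)\le K$ and therefore $F^*(G)=C_{F^*(G)}(K)K=K$ is quasisimple; in the nilpotent case one eliminates cyclic $F^*(G)$ (cyclic-by-abelian forces $G$ supersoluble, against the standing assumption) and the dihedral, semidihedral and quaternion $2$-groups of order at least $16$ (their automorphism groups are $2$-groups, forcing $G$ itself to be a $2$-group), leaving only elementary abelian of order $p^2$ and extraspecial of order $p^3$ (note $\Dih(8)$ and $\Q_8$ are extraspecial, so the order-$8$ cases are not exceptions). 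Where you diverge is in how the nilpotent case is controlled: since $\X$ is subgroup closed, $F^*(G)$ is itself a finite nilpotent $\X$-group, so part (1) of Theorem~\ref{one} (already established in \cite{Del13} and assumed at this point) lists its possible isomorphism types at once; this makes your prime-by-prime analysis of $F(G)$, the argument that $F(G)$ is a $p$-group or cyclic, and the appeal to P.~Hall's classification of $p$-groups with all characteristic abelian subgroups cyclic unnecessary. Within that redundant portion there is also an imprecise step: for a non-cyclic characteristic abelian subgroup $A$ of $P=O_p(G)$ you assert $P\le C_G(A)$, which is only valid when $A\le Z(P)$; the correct consequence of $C_G(A)\le A$ is merely $C_P(A)=A$, which does not force $A=P$. (Similarly, distinct components need not intersect trivially, though your contradiction survives because $L_1\le L_2$ with $[L_1,L_2]=1$ would make $L_1$ abelian.) Since you ultimately defer to part (1) anyway, these slips do not invalidate the plan, but the clean route is simply to quote part (1) for $F^*(G)$ and then run exactly the two exclusion arguments you describe.
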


\begin{proof}  Suppose first that $F^*(G)$ is nilpotent. Then its structure is given in part (1) of Theorem~\ref{one}. Suppose that $F^*(G)$ is cyclic.  Since $C_G(F^*(G))= F^*(G)$, we have $G/F^*(G)$ is isomorphic to a subgroup of $\Aut(F^*(G))$.
Because the automorphism group of a cyclic group is abelian, we have that $G$ is supersoluble. Therefore, by our assumption concerning $G$, $F^*(G)$ is not cyclic. Hence $F^*(G)$ is either elementary abelian of order $p^2$ for some prime $p$, is extraspecial of order $p^3$ for some odd prime $p$ or $F^*(G)$ is a dihedral, semidihedral or quaternion $2$-group. Since the automorphism groups of dihedral, semidihedral and quaternion groups of order at least $16$ are $2$-groups, we deduce that when  $p=2$ and $F^*(G)$ is non-abelian,  $F^*(G)$ is  extraspecial. This proves the lemma when $F^*(G)$ is nilpotent.

If $F^*(G)$ is not nilpotent, then there exists a component $K \le F^*(G)$.  As $F^*(G) = C_{F^*(G)}(K)K$ and $K$ is non-abelian, we have $F^*(G)= K$ and this is case (iii).
\end{proof}

\begin{lemma}\label{es}
Suppose that $p$ is a prime and $F^*(G)$ is extraspecial of order $p^3$. Then one of the following holds:
\begin{enumerate}
  \item $G \cong \SL_2(3)$,  $G \cong \SL_2(3)\udot 2$ (with quaternion Sylow $2$-subgroups of order $16$); or \item $G= NK$ where $N$ is extraspecial of order $p^3$ of exponent $p$ with $p$ an odd prime, $K$ centralizes $Z(N)$ and is cyclic of odd order dividing $p+1$. Furthermore, $G/Z(N)$ is a Frobenius group.
\end{enumerate}\end{lemma}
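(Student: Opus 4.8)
\emph{Setup and the case $p=2$.} Write $N=F^*(G)$ and $Z=Z(N)$, so $|Z|=p$, $C_G(N)=Z$, and $\bar G:=G/Z$ embeds in $\Aut(N)$. Suppose first $p=2$. Since $\Aut(\Dih(8))$ is a $2$-group while $G$ is not nilpotent (it is not supersoluble), we must have $N\cong\Q_8$, and $\bar G$ is a subgroup of $\Aut(\Q_8)\cong\Sym(4)$ containing $\Inn(\Q_8)$, the normal Klein four-group. As $G$ is not nilpotent, $3$ divides $|G|$ and hence $|\bar G|$, so $\bar G$ contains an element of order $3$ and therefore $\bar G\in\{\Alt(4),\Sym(4)\}$. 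If $\bar G\cong\Alt(4)$, then $|G|=24$, $|G/N|=3$, and Schur--Zassenhaus together with the uniqueness of order-$3$ subgroups of $\Sym(4)$ gives $G\cong\SL_2(3)$. If $\bar G\cong\Sym(4)$, then $|G|=48$ and a Sylow $2$-subgroup $P$ has order $16$, contains $\Q_8$ and is an $\X$-group, so by Theorem~\ref{one}(1) $P$ is semidihedral or quaternion; in the first case $G$ contains $\SL_2(3)$ with index $2$ and has semidihedral Sylow $2$-subgroups, forcing $G\cong\GL_2(3)$, but the normaliser of a Sylow $3$-subgroup of $\GL_2(3)$ is isomorphic to $\Dih(12)\cong\Sym(3)\times 2$, which is not an $\X$-group---so $P\cong\Q_{16}$ and $G\cong\SL_2(3)\udot 2$, giving conclusion (i).

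\emph{The case $p$ odd: reducing to $\bar G=(N/Z)\rtimes H$ with $H\le\GL_2(p)$.} First I would show $p\nmid|G/N|$: otherwise a Sylow $p$-subgroup of $G$ has order at least $p^4$, and being non-cyclic (it contains $N$) it is not among the $p$-groups allowed by Theorem~\ref{one}(1). Thus $G/N$ is isomorphic to a $p'$-subgroup $H$ of $\Out(N)$. Next I rule out $N$ having exponent $p^2$: then $\Omega_1(N)$ is characteristic of order $p^2$ and contains $Z$, so $\Out(N)$ stabilises the line $\Omega_1(N)/Z$ of $N/Z$ and its $p'$-subgroups are therefore abelian (conjugate into the torus of a Borel subgroup of $\GL_2(p)$); then $1<Z<\Omega_1(N)<N<G$ refines to a chief series of $G$ with cyclic factors, so $G$ is supersoluble, contrary to hypothesis. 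Hence $N$ has exponent $p$, $\Out(N)\cong\GL(N/Z)=\GL_2(p)$, and, identifying $H$ with a subgroup of $\GL_2(p)$, we get $\bar G=(N/Z)\rtimes H$ with $H$ acting naturally on $N/Z\cong\F_p^2$ and on $Z$ by the determinant.

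\emph{The core step: $H$ acts fixed-point-freely on $(N/Z)\setminus\{0\}$.} I will repeatedly use the elementary observation that if $X=A\times B$ with $A$ non-cyclic and $B\ne1$, then $C_X(A)=Z(A)\times B\not\le A$, so $X$ is not an $\X$-group. Suppose some $1\ne h\in H$ fixes a nonzero vector, let $M$ be the corresponding maximal subgroup of $N$ (elementary abelian of order $p^2$, with the line $M/Z$ fixed by $h$), and choose by Schur--Zassenhaus a $p'$-element $g\in G$ mapping to $h$; then $g$ normalises $M$ and, acting semisimply on $M$, fixes $M/Z$ pointwise. If $\det(h)=1$, then $g$ acts trivially on all of $M$, so $g\in C_G(M)\setminus M$, impossible since $M$ is non-cyclic. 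If $\det(h)\ne1$, then $M=\langle m\rangle\times Z$ for a $g$-fixed $m$ and $\langle M,g\rangle=\langle m\rangle\times(Z\rtimes\langle g\rangle)$ with $\langle g\rangle$ acting non-trivially (hence $Z\rtimes\langle g\rangle$ non-cyclic) on $Z$, contradicting the observation. This proves the claim; since $H\ne1$ (as $G$ is not supersoluble), $\bar G=G/Z(N)$ is a Frobenius group with kernel $N/Z$.

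\emph{Identifying $H$.} An involution of $H$ has determinant $1$ (a determinant $-1$ involution has eigenvalue $1$, against freeness), hence equals $-I$; but then, for any maximal $M\le N$ and a $p'$-lift $x$ of $-I$, $\langle M,x\rangle\cong p\times\Dih(2p)$ is not an $\X$-group, so $|H|$ is odd. As $H$ is a Frobenius complement with all Sylow subgroups cyclic, it is metacyclic, hence supersoluble; so, $G$ being non-supersoluble, $H$ acts irreducibly on $N/Z$, and therefore (Dickson's classification of subgroups of $\GL_2(p)$) $H$ lies in a non-split maximal torus $\Z_{p^2-1}$ and is cyclic. Put $D=\det(H)\le\F_p^*$ and $H_0=H\cap\SL_2(p)=\ker(\det|_H)$, so $|D|\mid p-1$, $|H_0|\mid p+1$, and since $|H|$ is odd, $\gcd(|D|,|H_0|)\mid\gcd(p-1,p+1)=2$ forces $\gcd(|D|,|H_0|)=1$. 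If $D\ne1$ then $|H_0|>1$ (otherwise $|H|=|D|\mid p-1$ and $H$ acts reducibly), so choosing $x\in G$ of prime order dividing $|H_0|$ inside a fixed $p'$-complement $\hat H\cong H$ we obtain $Z\hat H=Z\rtimes\hat H\le C_G(x)$ with $Z\rtimes\hat H\cong(Z\rtimes\hat H_1)\times\hat H_0$ again of the forbidden form $A\times B$, a contradiction. Hence $D=1$, i.e.\ $H\le\SL_2(p)$ centralises $Z$ and is cyclic of odd order dividing $p+1$; taking $K$ a $p'$-complement to $N$ in $G$ (Schur--Zassenhaus) yields conclusion (ii). The main difficulty is the bookkeeping in the last two paragraphs: correctly locating the non-$\X$ subgroups $\langle M,g\rangle$ and $Z\rtimes\hat H$, and the small arithmetic with $\gcd(p\pm1,2)$.
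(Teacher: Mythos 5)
Your proof is correct, but it takes a noticeably different route from the paper's, above all for odd $p$. The paper fixes a Schur--Zassenhaus complement $K$ at the outset and works with $K_1=C_K(Z)$: it shows $K_1\ne 1$, uses the Three Subgroups Lemma to get $C_N(x)=Z$ and then irreducibility of every non-trivial $x\in K_1$ on $N/Z$ (irreducibility is also how it gets exponent $p$), concludes that $K_1$ is cyclic of odd order dividing $p+1$ inside $\SL_2(p)$, and finally rules out $K\ne K_1$ by applying the coprime-action result \cite[27.18]{Asc93} to $\langle y,w\rangle$ acting on $N/Z$. You instead eliminate exponent $p^2$ first (supersolubility of $G$ via the characteristic subgroup $\Omega_1(N)$), identify $G/N$ with $H\le\GL_2(p)$ acting on $Z$ by the determinant, and prove fixed-point-freeness of all of $H$ on $N/Z$ by lifting a putative fixed vector and producing a subgroup of the forbidden shape $A\times B$ with $A$ non-cyclic (either $g\in C_G(M)\setminus M$ when $\det h=1$, or $\langle m\rangle\times(Z\rtimes\langle g\rangle)$ when $\det h\ne 1$); oddness then comes from $p\times\Dih(2p)$, cyclicity from Dickson (odd order plus irreducibility forces $H$ into a non-split torus), and $C_K(Z)=K$ from the obstruction $Z\hat H=(Z\rtimes\hat H_1)\times\hat H_0$. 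This buys you the Frobenius statement for $G/Z(N)$ early and avoids the appeal to \cite[27.18]{Asc93}, at the price of invoking Dickson's theorem and Frobenius-complement structure, plus a couple of facts asserted at roughly the paper's own level of compression (that a group of order $48$ with $\SL_2(3)$ of index $2$, $C_G(\Q_8)=Z(\Q_8)$ and semidihedral Sylow $2$-subgroups must be $\GL_2(3)$; that $p'$-subgroups of $\Out(N)$ are abelian when $N$ has exponent $p^2$) --- these are true and routine to check. For $p=2$ the two arguments essentially coincide, the paper reaching its contradiction through $N_G(T)$ and case (2.1.3) rather than through $\Dih(12)\le\GL_2(3)$. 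Note also that, unlike the paper's proof, you do not verify that the groups in conclusion (ii) are indeed $\X$-groups; the lemma as stated does not require this, but the ``Furthermore'' clause of Theorem~\ref{one} does.
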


\begin{proof}   Let $N= F^*(G)$. We have that $N$ is extraspecial of order $p^3$ by assumption.   Suppose first that $p=2$, then we have $N \cong \Q_8$ as the dihedral group of order $8$ has no odd order automorphisms and $G$ is not a $2$-group.  Since $\Aut(\Q_8) \cong \Sym(4)$, $G/Z(N)$ is isomorphic to a subgroup of $\Sym(4)$ containing $\Alt(4)$.  If $G/Z(N) \cong \Alt(4)$, then $G= NT \cong \SL_2(3)$  where $T$ is a cyclic subgroup of order $3$. When $G/Z(N) \cong \Sym(4)$, taking $T \in \Syl_3(G)$, we have $NT \cong \SL_2(3)$, $N_G(T) $ has order $12$ and $N_G(T)/Z(N) \cong \Sym(3)$.  Since $N_G(T)$ is an $\X$-group and $N_G(T)$ is supersoluble, we see that $N_G(T)$ is a product $DT$ where $D$ is cyclic of order $4$ by (2.1.3).  Because the Sylow $2$-subgroups of $G$ are either dihedral, semidihedral or quaternion and $D\not \le N$, we see that $ND$ is quaternion. Thus $G \cong \SL_2(3)\udot 2$ as claimed in (i).

Assume that $p$ is odd.  We know that the outer automorphism group of $N$ is isomorphic to a subgroup of $ \GL_2(p)$ and $C_{\Aut(N)}(Z(N))/\Inn(N) $ is isomorphic to a subgroup of $ \SL_2(p)$.  Since $p$ is odd and the Sylow $p$-subgroups of $G$ are $\X$-groups, we have  $N \in \Syl_p(G)$ and  $G/N$ is a $p'$-group by part (1) of Theorem~\ref{one}. Set $Z= Z(N)$.
Since $G/N$ and $N$ have coprime orders, the Schur Zassenhaus Theorem says that $G$ contains a complement $K$ to $N$.
Set $K_1= C_K(Z)$. Then $K_1$ commutes with $Z$ and so $K_1$ is cyclic. If $K_1=1$, then $|K|$ divides $p-1$ and we find that $G$ is supersoluble, which is a contradiction. Hence $K_1 \ne 1$.
Let $x \in K_1$. Then $[N,x]$ and $ C_N(x)$ commute by the Three Subgroups Lemma. Hence $C_N(x)$ centralizes $[N,x]\langle x\rangle$ which is non-abelian. It follows that $[N,x]= N$ and $C_N(x)  =Z $.
 If $\langle x \rangle$ does not act irreducibly on $N/Z$, then there exists $Z < N_1<N$ which is $\langle x \rangle$-invariant.  If $N_1$ is cyclic, then, as $\langle x \rangle$ centralizes $\Omega_1(N_1)=Z$, $\langle x \rangle$ centralizes $N_1>Z$, a contradiction.  If $N_1$ is elementary abelian,  then, as $\langle x \rangle$ centralizes $Z$, $[N_1,\langle x \rangle]$ has order at most $p$   by Maschke's Theorem.  If $[N_1,\langle x \rangle]\ne 1$, then $[N_1,\langle x \rangle]\langle x \rangle$ is non-abelian  and $Z$ centralizes $[N_1,\langle x \rangle]\langle x \rangle$, a contradiction.   Hence $\langle x \rangle$ centralizes $N_1$ contrary to $C_N(\langle x \rangle) = Z$.  We conclude that every element of $K_1$ acts irreducibly on $N/Z(N)$. In particular, since $K_1$ is isomorphic to a  subgroup of $\SL_2(p)$, we have that $K_1$ is cyclic of odd order dividing $p+1$. Furthermore, as $K_1$ acts irreducibly on $N/Z(N)$, $N$ has exponent $p$.

 By the definition of $K_1$,  $|K/K_1|$ divides $|\Aut(Z)|= p-1$. Assume that $K \ne K_1$ and let  $y \in K\setminus K_1$ have prime order $r$. Then $r$ does not divide $|K_1|$ and $Z\langle y \rangle$ is non-abelian.  Since $K_1$ centralizes $Z$, we have $C_{K_1}(y) =1$.  Let $w \in K_1$ have prime order $q$. Then $\langle y\rangle\langle w\rangle$ is non-abelian and acts faithfully on $V= N/Z$. Therefore  \cite[27.18]{Asc93} implies that $C_N(y) \ne 1$. As $C_N(y) \cap Z=1$ and $C_N(y)$ centralizes $Z\langle y \rangle$, we have a contradiction. Hence $K=K_1$. Finally, we note that $NK/Z(N)$ is a Frobenius group.

It remains to show that the groups listed are $\X$-groups. We consider the groups listed in (ii) and leave the groups in (i) to the reader.  Assume that $H \le G$ is  non-cyclic. We shall show that $C_G(H) \le H$.  If $H \ge N$, then $C_G(H) \le C_G(N) \le N\le H$ and we are done.   Suppose that $H < N$. Then, as $N$  is extraspecial of exponent $p$, $H$ is elementary abelian of order $p^2$ and $C_N(H)=H$. Since $G/N$ is cyclic of odd order dividing $p+1$, we see that $N_G(H)=N$ and so $C_G(H)=C_N(H)=H$ and we are done in this case. Suppose that $H \not \le N$ and $N \not\le H$. Let $h \in H \setminus N$. Then, as $|G/N|$ divides $p+1$ and is odd, we either have $H \cap N= N$ or $H \cap N= Z$. So we must have $H \cap N= Z= Z(G)$. Now $H/Z \cong G/N$ is cyclic of order dividing $p+1$ and so we get that $H$ is cyclic, a contradiction. Thus $G$ is an $\X$-group.
 \end{proof}

\begin{lemma}\label{abelian} Suppose that $N=F^*(G)$ is elementary abelian of order $p^2$.  Then one of the following holds:
\begin{enumerate}
\item $p=2$,  $G \cong \Sym(4)$ or $\Alt(4)$; or \item $p$ is odd and
$G= NG_0$ is a Frobenius group with Frobenius kernel $N$ and Frobenius complement $G_0$ which is itself  an $\X$-group.  Furthermore, either
\begin{enumerate}\item $G_0$ is cyclic of order dividing $p^2-1$ but not dividing $p-1$;
\item $G_0$ is quaternion;
\item $G_0$ is supersoluble as in part $(2.1.2)$ of Theorem~$\ref{one}$ with $|C|$ dividing $p-\epsilon $ where $p\equiv \epsilon \pmod 4$;
\item $G_0$ is supersoluble as in part $(2.1.3)$ of Theorem~$\ref{one}$ with $D$ a $2$-group, $C_D(C)$ a non-trivial maximal subgroup of $D$ and $ |C|$ odd dividing $p-1$ or $p+1$;
    \item $G_0 \cong \SL_2(3)$;
    \item  $\SL_2(3)\udot 2$ and  $p \equiv \pm 1 \pmod 8$; or
\item $G_0 \cong \SL_2(5) $ and  $60$ divides $p^2-1$.
\end{enumerate}
\end{enumerate}
Furthermore, all the groups listed are $\X$-groups.
\end{lemma}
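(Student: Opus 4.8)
Write $N=F^*(G)$; recall that $C_G(N)=N$, so $G/N$ embeds faithfully into $\Aut(N)\cong\GL_2(p)$. The plan has four stages: (i) reduce to the case where $G/N$ acts irreducibly on $N$; (ii) settle $p=2$ directly; (iii) for odd $p$, prove that $G$ is a Frobenius group with kernel $N$, whence $N\in\Syl_p(G)$, $G/N$ is a $p'$-group, and $G=N\rtimes G_0$ by Schur--Zassenhaus; (iv) identify $G_0$ from the structure of Frobenius complements together with the $\X$-property, and check the converse.

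Stage (i): if $G/N$ stabilised a subgroup $M<N$ of order $p$, then $M\trianglelefteq G$ and $G/N$ would lie in a Borel (upper-triangular) subgroup of $\GL_2(p)$, which is supersoluble; refining a chief series of $G/N$ by the $G$-invariant chain $1<M<N$ would then make $G$ supersoluble, against the standing hypothesis. So $N$ is minimal normal in $G$ and $G/N$ acts irreducibly. Stage (ii): $\GL_2(2)\cong\Sym(3)$ has only the irreducible subgroups $3$ and $\Sym(3)$; as the relevant second cohomology groups vanish, $G\cong\Alt(4)$ or $\Sym(4)$, and both are easily checked to be $\X$-groups. So from now on $p$ is odd.

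Stage (iii): suppose some $\bar g\in(G/N)\setminus\{1\}$ fixes $v\in N\setminus\{1\}$; then $g\in C_G(v)$, and $C_G(v)<G$ since otherwise $\langle v\rangle\trianglelefteq G$, contradicting minimality of $N$. Since $C_G(v)/N$ embeds into the stabiliser of $v$ in $\GL_2(p)$, a metacyclic group of order $p(p-1)$, and $\langle v\rangle\trianglelefteq C_G(v)$ with $N/\langle v\rangle$ cyclic, $C_G(v)$ is supersoluble; being an $\X$-group with elementary abelian normal subgroup $N$, Theorem~\ref{one}(1)--(2) forces it to be extraspecial of order $p^3$ (the non-nilpotent, type $(2.2)$ alternative is excluded because $v\in Z(C_G(v))$, whereas a Frobenius group with nontrivial complement is centreless). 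Hence a Sylow $p$-subgroup $P$ of $G$ is extraspecial of order $p^3$ by Theorem~\ref{one}(1), so $N$ is a maximal subgroup of $P$ containing $Z(P)$, and $N\notin\Syl_p(G)$. For $g_0\in P\setminus N$ we have $[N,g_0]\le[P,P]=Z(P)$ and $g_0\notin C_G(N)$, so the image of $g_0$ in $G/N$ is a transvection. Its normal closure $H_0$ in $G/N$ is generated by transvections, hence is not diagonalisable, so (being a normal subgroup of the irreducible group $G/N$) it acts irreducibly; an irreducible subgroup of $\GL_2(p)$ generated by transvections equals $\SL_2(p)$, so $\SL_2(p)\le G/N$. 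Now let $u$ be a transvection in $\SL_2(p)\le G/N$ and let $s$ be the central involution of $\SL_2(p)$; in the preimage $W$ of $\langle u,s\rangle\cong C_{2p}$ one has $W=Q\rtimes\langle t\rangle$ with $Q$ extraspecial of order $p^3$ (its unique Sylow $p$-subgroup) and $t$ an involution inverting $N$ and centralising $Q/N$. A short coprime-action computation gives $C_Q(t)=\langle w\rangle$ of order $p$ with $w\notin N$, and then $C_W(w)=\langle w\rangle\times\bigl(Z(Q)\rtimes\langle t\rangle\bigr)$; the subgroup $Z(Q)\rtimes\langle t\rangle\cong\Dih(2p)$ of $G$ is non-cyclic and is centralised by $w\notin\Dih(2p)$, contradicting $G\in\X$. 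Therefore $G$ is a Frobenius group with kernel $N$.

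Stage (iv): $G_0\cong G/N$ is a Frobenius complement, an $\X$-group, and admits a faithful $2$-dimensional $\F_p$-representation (its action on $N$). Running through Zassenhaus's classification of Frobenius complements, retaining only the $\X$-groups — using Theorem~\ref{one}(1)--(2) for the soluble candidates and a direct check for $\SL_2(3)$, $\SL_2(3)\udot 2$ and $\SL_2(5)$ — and imposing the representability constraints (a cyclic complement must embed into a Singer cycle of order $p^2-1$ but not into the torus of order $p-1$; a quaternion complement of order $16$ requires $p\equiv\pm1\pmod 8$; the $\SL_2(5)$ case requires $60\mid p^2-1$; and so on), one is left with exactly the groups in the statement. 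Conversely $N$ is an $\X$-group, each listed $G_0$ is an $\X$-group, and since $G=N\rtimes G_0$ is Frobenius, Lemma~\ref{l:two} gives $G\in\X$. I expect the crux to be Stage (iii): proving $N\in\Syl_p(G)$, and when this fails, manufacturing the explicit $\X$-violating dihedral subgroup of $G$ from the presence of $\SL_2(p)$ in $G/N$; the bookkeeping in Stage (iv) against Zassenhaus's list is lengthy but routine.
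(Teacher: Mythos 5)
Your argument is correct, and its crux (your Stage (iii)) takes a genuinely different route from the paper, although both roads end at the same contradiction. The paper first shows $p\nmid |G/N|$ by citing Gorenstein's Theorem 2.8.4 to produce $K=\langle P,P^g\rangle\cong p^2{:}\SL_2(p)$ from a non-normal extraspecial Sylow $p$-subgroup, and then proves the Frobenius property separately by a short coprime-action argument ($C_N(x)$ would centralize the non-abelian group $[N,x]\langle x\rangle$). You instead attack any failure of the Frobenius property directly: for a fixed vector $v$ you show $C_G(v)$ is a supersoluble $\X$-group and use Theorem~\ref{one}(1)--(2) together with the trivial-centre property of Frobenius groups to force $C_G(v)$ extraspecial of order $p^3$; from there you manufacture $\SL_2(p)\le G/N$ via Clifford's theorem and the classical fact that an irreducible subgroup of $\GL_2(p)$ generated by transvections is $\SL_2(p)$, and then exhibit the element $w$ of order $p$ centralizing $Z(Q)\langle t\rangle\cong\Dih(2p)$ without lying in it --- exactly the configuration ($P_0$ centralizing $Z\langle t\rangle$) that the paper extracts from $p^2{:}\SL_2(p)$. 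So you trade one black box (Gorenstein 2.8.4) for another of comparable weight (irreducible transvection groups in rank $2$), and your treatment of the Frobenius step is arguably more uniform, since one argument handles both $p$-elements and $p'$-elements fixing vectors. Your Stages (i) and (ii) are fine (your Borel/chief-series argument for supersolubility works because the invariant line gives a $G$-normal cyclic series below $N$), and Stage (iv) is in substance the paper's analysis: the paper runs through nilpotent, supersoluble and non-supersoluble complements using its earlier lemmas, Huppert V.8.18\,c) to exclude (2.1.1), and Zassenhaus's $\SL_2(5)$ theorem for the insoluble case, whereas you defer to Zassenhaus's classification of Frobenius complements and only assert the representability constraints; that bookkeeping (e.g.\ the congruences in cases (c), (d), (f), (g)) is indeed routine but is the one place where your write-up states rather than derives the conclusions.
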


\begin{proof}
We have $N$ has order $p^2$, is elementary abelian and $G/N$ is isomorphic to a subgroup of $\GL_2(p)$. If $p=2$, then we quickly obtain part (i). So assume that $p$ is odd.

Suppose  that $p$ divides the order of $G/N$. Let $P \in \Syl_p(G)$. Then $P$ is extraspecial of order $p^3$ and $P$ is not normal in $G$.  Hence  by \cite[Theorem 2.8.4]{Gor68} there exists $g \in G$ such that $G \ge K=\langle P, P^g\rangle \cong p^2{:}\SL_2(p)$.  Let $Z= Z(P)$, $t$ be an involution in $K$, $K_0 = C_K(t)$ and $P_0 =  P \cap K_0$.  Then,  as $t$ inverts $N$,  $K_0 \cong \SL_2(p)$,  $P_0$ has order $p$ and centralizes $Z\langle t\rangle$, which is a contradiction as $Z\langle t\rangle \cong \Dih(2p)$. Hence $G/N$ is a $p'$-group.

Suppose that $x \in G\setminus N$. If $C_N(x)\ne 1$, then $C_N(x)$ centralizes $[N,x]\langle x \rangle$ which is non-abelian, a contradiction. Thus $C_N(x) =1$ for all $x \in G \setminus N$.  It follows that $G$ is a Frobenius group with Frobenius kernel $N$. Let $G_0$ be a Frobenius  complement to $N$. As $G_0 \le G$, $G_0$ is an $\X$-group.   Recall that the Sylow $2$-subgroups of $G_0$ are either cyclic or quaternion and that the odd order Sylow subgroups of $G_0$ are all cyclic \cite[V.8.7]{Hu67}.

 Assume that $N$ is not a minimal normal subgroup of $G$.  Then $G/N$ is  conjugate in $\GL_2(p)$ to a subgroup of the diagonal  subgroup. Therefore  $G$ is supersoluble, which is a contradiction. Hence $N$ is a minimal normal subgroup of $G$ and $G_0$ is isomorphic to an irreducible subgroup of $\GL_2(p)$.  This completes the general description of the structure of  $G$. It remains to determine the structure of $G_0$.

  If $G_0$ is nilpotent, then Theorem~\ref{one} (1)  applies to give $G_0$ is either quaternion or cyclic.   In the latter case, as $G_0$ acts irreducibly on $N$ it is isomorphic to a subgroup of the multiplicative group of $\GF(p^2)$ and is not of order dividing $p-1$. This gives the structures in (ii) (a) and (b).

  If $G_0$ is supersoluble, then the structure of $G_0$ is described in part $(2.1)$ of Theorem~\ref{one}, as $\GL_2(p)$ contains no extraspecial subgroups of odd order. We adopt the notation from $(2.1)$.  By \cite[V.8.18 c)]{Hu67}, $Z(G_0) \ne 1$. Hence $(2.1.1)$ cannot occur.  Case  $(2.1.2)$ can occur  and, as $C$ commutes with a non-central cyclic subgroup of order at least $4$ and $G_0$ is isomorphic to a subgroup of $ \GL_2(p)$,  $|C|$ divides $p-1$ if $p \equiv 1 \pmod 4$ and $|C| $ divides $p+1$ if $p \equiv 3 \pmod 4$.  In the situation described in part $(2.1.3)$ of Theorem~\ref{one}, the groups have no $2$-dimensional faithful representations unless  $q=2$ and $C_{D}(C)$ has index $2$. In this case $|C|$ is an odd divisor of $p -1$ or $p+1$.

   Suppose that $G_0$ is not supersoluble.  Refereing to Lemma~\ref{basic1} and using the fact that the Sylow subgroups of $G_0$ are either cyclic or  quaternion, we have that $F^*(G_0)$ is either quaternion of order $8$ or  $F^*(G_0)$ is quasisimple. In the first case we obtain the structures described in parts (b), (e) and (f) from Lemma~\ref{es} where for part (f) we note that we require $\SL_2(p)$ to have order divisible by $16$.

  If $F^*(G_0)$ is quasisimple, then Zassenhaus's Theorem \cite[Theorem 18.6, p. 204]{Passman68} gives $G_0 = WM$ where $W \cong \SL_2(5)$ and $M$ is metacyclic. Since $G_0$ is an $\X$-group, this means that $M \le W$ and $G_0 \cong \SL_2(5)$. Since $\SL_2(5)$ is isomorphic to a subgroup of $\GL_2(p)$ only when $p = 5$ or $60$ divides $p^2-1$ and $p\ne 5$  part (g) holds.

  That $\Sym(4)$ and $\Alt(4)$ are $\X$-groups is easy to check.  The groups listed in (ii) are $\X$-groups by Lemma~\ref{l:two}.
\end{proof}

The finite simple $\X$-groups are determined in \cite{Del13}. We have to extend the arguments to the cases where $F^*(G)$ is simple or quasisimple. This is relatively elementary.

\begin{lemma}\label{s} Suppose that $F^*(G)$ is simple. Then $G \cong \SL_2(4)$, $\PSL_2(9)$, $\Mat(10)$ or $\PSL_2(p)$ where $p$ is a Fermat or Mersenne prime.
\end{lemma}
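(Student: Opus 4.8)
The strategy is to reduce everything to the known list of finite simple $\X$-groups and then inspect their automorphism groups. Since $F^*(G)$ is simple and contains its centralizer in $G$, setting $S=F^*(G)$ we get $C_G(S)=1$, so $S\le G\le\Aut(S)$, and, being a subgroup of the $\X$-group $G$, $S$ is itself an $\X$-group. By \cite{Del13} the finite simple $\X$-groups are precisely $\SL_2(4)\cong\Alt(5)$, $\PSL_2(9)\cong\Alt(6)$, and $\PSL_2(p)$ for $p$ a Fermat or a Mersenne prime. Hence it only remains to determine which overgroups $G$ of such an $S$ inside $\Aut(S)$ are $\X$-groups, and I expect this to be $G=S$ except when $S\cong\Alt(6)$, in which case also $G\cong\Mat(10)$ arises.

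The technical input is the following elementary observation, which I would record first: a dihedral group $\Dih(2m)$ is an $\X$-group if and only if $m$ is a power of $2$ or $m$ is odd. Indeed, for $m$ a power of $2$ this is contained in part~$(1.4)$ of Theorem~\ref{one}; for $m$ odd, every non-cyclic subgroup of $\Dih(2m)$ has the form $\Dih(2k)$ with $k\mid m$ and $k\ge 3$, and its centralizer in $\Dih(2m)$ is trivial, so the $\X$-condition holds. Conversely, if $m=2^a\ell$ with $a\ge1$ and $\ell>1$ odd, then the subgroup $\Dih(2\ell)\le\Dih(2m)$ has trivial centre, while the central involution of $\Dih(2m)$ centralizes it but does not lie in it; thus $\Dih(2\ell)$ is not self-centralizing. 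In what follows I will use this repeatedly in the form: a group containing a subgroup isomorphic to $\Dih(12)$, or more generally to $\Dih(2m)$ with $m=2^a\ell$, $a\ge1$, $\ell>1$ odd, is not an $\X$-group.

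Now for the case-by-case elimination. If $S\cong\SL_2(4)$ then $\Aut(S)\cong\Sym(5)$, and $\Sym(5)$ contains $\Sym(3)\times\Sym(2)\cong\Dih(12)$, so $G=S$. If $S\cong\PSL_2(p)$ with $p$ a Fermat or a Mersenne prime, then $|\Out(S)|=2$ and the only proper overgroup of $S$ in $\Aut(S)$ is $\PGL_2(p)$, whose involution centralizers are dihedral of orders $2(p-1)$ and $2(p+1)$ (see, e.g., \cite{Hu67}); since for $p\ge 5$ exactly one of $p-1,p+1$ is a power of $2$ and the other is $2$ times an odd number larger than $1$, $\PGL_2(p)$ contains a forbidden dihedral subgroup, and again $G=S$. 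If $S\cong\PSL_2(9)\cong\Alt(6)$, then $\Out(S)\cong\ztwo\times\ztwo$, and the three index-two subgroups of $\Aut(S)$ are $\Sym(6)$, $\PGL_2(9)$ and $\Mat(10)$ (see \cite{Con85}); of these, $\Sym(6)\ge\Sym(5)$ contains $\Dih(12)$, and $\PGL_2(9)$ has an involution centralizer isomorphic to $\Dih(20)$, so neither is an $\X$-group, and hence neither is $\Aut(S)$. This leaves exactly $G\cong\Alt(6)$ and $G\cong\Mat(10)$, as claimed.

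For completeness one then checks that $\Mat(10)$ is genuinely an $\X$-group: by Lemma~\ref{l:one} it suffices that the centralizer of every element of prime order be an $\X$-group, and using that the Sylow $2$-subgroups of $\Mat(10)$ are semidihedral of order $16$, the centralizer of an involution is semidihedral of order $16$, the centralizer of an element of order $3$ is elementary abelian of order $9$, and the centralizer of an element of order $5$ is cyclic, all $\X$-groups by Theorem~\ref{one}. The point I expect to demand most care is the dihedral lemma — specifically verifying that the central involution of $\Dih(2m)$ really falls outside the small subgroup $\Dih(2\ell)$ — together with the exact identification of the involution centralizers inside $\PGL_2(q)$ and inside $\Mat(10)$.
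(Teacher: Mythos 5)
Your proposal is correct and follows essentially the same route as the paper: reduce to the list of finite simple $\X$-groups from \cite{Del13}, then eliminate the proper overgroups inside $\Aut(F^*(G))$ by exhibiting dihedral-type subgroups ($\Dih(12)\cong 2\times\Sym(3)$ in $\Sym(5)$ and $\Sym(6)$, $\Dih(20)\cong 2\times\Dih(10)$ in $\PGL_2(9)$, and $\Dih(2(p\pm1))$ in $\PGL_2(p)$) that fail to be self-centralizing, leaving only $\Mat(10)$, which is checked via centralizers of prime-order elements. Your explicit criterion for when $\Dih(2m)$ is an $\X$-group is just a tidy packaging of the ad hoc verifications the paper performs in each case.
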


\begin{proof}
Set $H= F^*(G)$.  As $\X$ is subgroup closed, $H$  is an $\X$-group  and so $H$ is one of the groups listed in the statement by Theorem~3.7 of \cite{Del13}.  Hence we obtain $H \cong \SL_2(4)$, $\PSL_2(9)$ or $\PSL_2(p)$  for $p$  a Fermat or Mersenne prime.

 Suppose that $G>H$. If $H \cong \SL_2(4)$, then $G \cong \Sym(5)$ and the subgroup $2\times \Sym(3)$ witnesses the fact that $\Sym(5)$ is not an $\X$-group.
Suppose $H \cong \PSL_2(9)\cong \Alt(6)$. If $G \ge K \cong \Sym(6)$, then $G$ contains $\Sym(5)$ which is impossible. Therefore $G \cong \PGL_2(9)$ or $G \cong \Mat(10)$.
In the first case, $G $ contains a subgroup $\Dih(20) \cong 2 \times \Dih(10)$ which is impossible. Thus $G \cong \Mat(10)$ and this group is easily shown to satisfy the hypothesis as all the centralizer of elements of prime order are $\X$-groups.

If $H \cong    \PSL_2(p)$,  $p$ a Fermat or Mersenne prime, then $G \cong \PGL_2(p)$ and contains a dihedral group of order $2(p + 1)$ and one of order $2(p - 1)$. One of these is not a $2$-group and this contradicts $G$ being an $\X$-group.
\end{proof}

\begin{lemma}\label{qs}
Suppose that $F^*(G)$ is quasisimple but not simple. Then $F^*(G) \cong \SL_2(p)$ where $p$ is a Fermat prime, $|G/H|\le 2$ and $G$ has quaternion Sylow $2$-subgroups.
\end{lemma}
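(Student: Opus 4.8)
The plan is to leverage the fact that $F^*(G)$ is quasisimple but not simple, so $Z = Z(F^*(G)) \ne 1$ and $H/Z := F^*(G)/Z$ is a simple $\X$-group (being a quotient of the subgroup-closed class is not automatic, but $H/Z$ is still covered: we must show it is one of the simple $\X$-groups, in fact $H$ itself is an $\X$-group since $\X$ is subgroup-closed, and we exploit this directly). First I would observe that $Z$ is cyclic, and that for any involution $t$ of $H$ (if one exists) the subgroup $\langle t\rangle Z$ would be non-cyclic unless $Z$ has even order forces $t \in Z$ or the structure is constrained; the cleaner route is: since $H$ is a quasisimple $\X$-group, pick $x \in Z$ of prime order $q$ and note $C_H(x) = H$, so $H$ is its own centralizer of a central element — this gives no contradiction, so instead I track a non-central element. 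The real leverage: take any non-cyclic abelian subgroup $A$ of $H$ containing $Z$ (for instance $A = \langle a \rangle Z$ for a suitable $a$); then $C_H(A) \le A$, which severely restricts centralizers. Running this against the known list of quasisimple groups whose central quotient is a simple $\X$-group (that is, $\SL_2(4)$ has trivial multiplier issues, $\PSL_2(9)$ has Schur cover $\SL_2(9)$, and $\PSL_2(p)$ for $p$ Fermat or Mersenne has Schur cover $\SL_2(p)$), I would eliminate all but $H \cong \SL_2(p)$ with $p$ Fermat.

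The key steps, in order: (1) By Lemma~\ref{s} applied to $H/Z(H)$, the simple group $H/Z(H)$ is one of $\PSL_2(4) \cong \Alt(5)$, $\PSL_2(9)$, or $\PSL_2(p)$ with $p$ a Fermat or Mersenne prime; since $H$ is not simple, $Z(H) \ne 1$, so $H$ is a non-trivial quotient of the Schur cover. (2) Dispose of $\PSL_2(4) \cong \PSL_2(5)$: its Schur multiplier is $\Z_2$, and $\SL_2(4) \cong \PSL_2(4)$ already (perfect central extension by $\Z_2$ is $\SL_2(5)$ via the exceptional isomorphism $\PSL_2(4)\cong \PSL_2(5)$); check whether $\SL_2(5)$ is an $\X$-group — it is, but is it excluded here because $F^*(G)=\SL_2(5)$ would have central quotient $\Alt(5)$, and one must verify $\SL_2(5)$ is genuinely quasisimple non-simple and whether it satisfies $\X$; it does, so this case needs care and is likely folded into the Fermat prime $p=5$ case since $5$ is a Fermat prime. (3) Exclude $\PSL_2(9)$: its Schur cover is $\SL_2(9)$ (multiplier $\Z_6$, but the relevant perfect cover with even center is $\SL_2(9)$); a Sylow $2$-subgroup of $\SL_2(9)$ is quaternion of order $16$, and one locates inside $\SL_2(9)$ a subgroup isomorphic to $\SL_2(5)$ or a non-cyclic abelian subgroup whose centralizer is too large — more directly, $\SL_2(9)$ contains $2 \times \Alt(4)$-type obstructions or a subgroup $\Q_8 \times \Z_{something}$; showing $\SL_2(9)$ fails $\X$ via an explicit non-cyclic self-centralizing failure. (4) For the surviving case $H \cong \SL_2(p)$, $p$ Fermat: recall the Sylow $2$-subgroup of $\SL_2(p)$ is quaternion (since $p \equiv 1 \pmod 4$ makes $|\SL_2(p)|_2$ divisible by $8$ with quaternion structure), hence $G$ has quaternion Sylow $2$-subgroups. (5) Bound $|G/H|$: $\Out(\PSL_2(p)) \cong \Z_2$ for $p$ prime, and $C_G(H) \le Z(H) \subseteq H$ since $F^*(G)$ contains its centralizer, so $G/H \hookrightarrow \Out(H) \cong \Z_2$, giving $|G/H| \le 2$; the outer extension is $\SL_2(p)\udot 2$ as described after Theorem~\ref{one}, and one checks it preserves quaternion Sylow $2$-subgroups and the $\X$-property (the latter via Lemma~\ref{l:one}, examining centralizers of elements of prime order, all of which land in quaternion groups, $\SL_2(3)$-like groups, or cyclic groups).

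The main obstacle I anticipate is step (3), ruling out $\PSL_2(9)$ (and its cousin $\Mat(10)$, which although not quasisimple must be checked not to arise as $F^*(G)$ — but $\Mat(10)$ is not quasisimple so it is irrelevant here, only $\SL_2(9)$ and $3{\cdot}\PSL_2(9)$ matter). Concretely one needs to exhibit a non-cyclic subgroup of $\SL_2(9)$ failing to be self-centralizing: a natural candidate is a Klein four-group inside $\SL_2(9)$ — but $\SL_2(9)$ has a unique involution, so there is no Klein four-group, which actually makes $\SL_2(9)$ harder to kill this way; instead I would use that $\SL_2(9)$ contains $\SL_2(3)$ and look at $C_{\SL_2(9)}(\SL_2(3))$ or find a subgroup isomorphic to $\Q_8 \ast \Z_k$ (central product) that is non-cyclic with non-trivial centralizer outside it, or invoke that $\SL_2(9) \supseteq \SL_2(5)$ with the embedding forcing a bad centralizer. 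The cleanest argument is probably: if $\PSL_2(9) \cong \Alt(6)$ is a simple $\X$-group then a perfect central extension $\hat H$ with $|Z(\hat H)|$ even would contain, over a subgroup $\Alt(4) \le \Alt(6)$, a preimage $\SL_2(3)$ of order $24$, whose centralizer in $\hat H$ is $Z(\hat H)$, fine — so one must go deeper and use a non-cyclic abelian subgroup of $\hat H$; the dihedral/semidihedral Sylow $2$-subgroup classification (which the authors stress they use) tells us $\SL_2(9)$ with quaternion Sylow $2$-subgroup of order $16$ has its structure tightly pinned, and comparing with the list in Lemma~\ref{es} (which gave only $\SL_2(3)$ and $\SL_2(3)\udot 2$ with extraspecial $F^*$) shows $\SL_2(9)$ cannot have $F^*(G)$ of the required form, forcing a contradiction. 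Wrapping up, after eliminating $\SL_2(9)$ and handling $\SL_2(5)$ under the Fermat umbrella, the conclusion $F^*(G) \cong \SL_2(p)$ with $p$ Fermat, $|G/H| \le 2$, quaternion Sylow $2$-subgroups follows, and the forward direction (these are $\X$-groups) is checked via Lemma~\ref{l:one} since every prime-order-element centralizer in $\SL_2(p)\udot 2$ is cyclic, quaternion, or isomorphic to $\SL_2(3)$-type, all known to be $\X$-groups.
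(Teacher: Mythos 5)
Your overall plan has a foundational gap at step (1): you cannot apply Lemma~\ref{s} (equivalently, the classification of finite simple $\X$-groups) to $H/Z$, because $\X$ is subgroup-closed but not known to be quotient-closed, so there is no reason why $H/Z$ should be an $\X$-group. You flag this yourself ("not automatic") but then build the whole case division on it without a substitute. The paper pins down $H/Z$ by a different route that stays inside $H$: a Sylow $2$-subgroup $S$ of $H$ is a nilpotent $\X$-group, hence cyclic, elementary abelian of order $4$, dihedral, semidihedral or quaternion; one first shows $Z\le S$ (if some element of $Z$ of odd prime order lay outside $S$, it would centralize the non-cyclic self-centralizing $S$, and a cyclic $S$ would force a normal $2$-complement in the perfect group $H$), which in particular kills the odd-centre covers such as $3\cdot\PSL_2(9)$ that you only mention in passing; then $S/Z$ is dihedral and the dihedral-Sylow classification gives $H/Z\cong\Alt(7)$ or $\PSL_2(q)$, $q$ odd. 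The observation that every odd-order Sylow subgroup of $G$ must be cyclic (it commutes with the central involution) then eliminates $\Alt(7)$ and all $\PSL_2(q)$ with $q$ not prime; in particular this is the one-line exclusion of $\SL_2(9)$ that your step (3) searches for and never finds: its Sylow $3$-subgroup is elementary abelian of order $9$, non-cyclic, and centralized by the central involution lying outside it. The routes you float instead (Klein four-subgroups, which do not exist in $\SL_2(9)$; the centralizer of an $\SL_2(3)$, which is just the centre and yields no contradiction; a comparison with Lemma~\ref{es}, which concerns extraspecial $F^*(G)$ and is irrelevant here) all fail.

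Even granting the list, two essential steps are missing or wrong. First, Mersenne primes are never eliminated: you pass from "Fermat or Mersenne" in step (1) to "the surviving case $p$ Fermat" in step (4) with no argument. The needed point is that if $p-1$ has an odd prime divisor $r$, then $\SL_2(p)$ contains a non-abelian subgroup of order $pr$ (inside a Borel subgroup) centralized by the central involution outside it, contradicting $\X$; hence $p-1$ must be a power of $2$. Second, for $G>H$ you must prove that a Sylow $2$-subgroup of $G$, not just of $H$, is quaternion: a priori $G$ could contain an involution outside $H$ (for instance a split extension with semidihedral Sylow $2$-subgroups), and asserting that "the outer extension is $\SL_2(p)\udot 2$" assumes what is to be shown. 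The paper excludes an outside involution $t$ via the Baer--Suzuki theorem: $t$ lies in a dihedral subgroup of order $2r$, $r$ an odd prime, which commutes with $Z$, a contradiction. Finally, your closing verification that $\SL_2(p)$ and $\SL_2(p)\udot 2$ are $\X$-groups via Lemma~\ref{l:one} is circular, since the centralizer of the central involution is the whole group; the paper instead verifies the $\X$-property through the embedding into $\SL_2(p^2)$, Schur's Lemma applied to irreducible non-cyclic subgroups, and Dickson's theorem.
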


\begin{proof}
Let $H= F^*(G)$ and $Z= Z(H)$. Since $H$ centralizes $Z$, we have $Z$ is cyclic. Let $S \in\Syl_2(H)$. If $Z \not \le S$, then $S$ must be cyclic. Since groups with a cyclic Sylow $2$-subgroup have a normal $2$-complement \cite[39.2]{Asc93}, this is impossible. Hence $Z \le S$. In particular, $Z(G) \ne 1$ as the central involution of $H$ is central in $G$. It follows also that all the odd order Sylow subgroups of $G$ are cyclic.
By part (1) of Theorem~\ref{one}, $S$ is either abelian, dihedral, semidihedral or quaternion.  If $S$ is abelian, then $S/Z$ is cyclic and again we have a contradiction. So $S$ is non-abelian. Thus $S/Z$ is dihedral (including elementary abelian of order $4$).  Hence $H/Z \cong \Alt(7)$ or $\PSL_2(q)$ for some odd prime power $q$ \cite[Theorem 16.3]{Gor68}. Since  the odd order Sylow subgroups of $G$ are cyclic,  we deduce that  $H \cong \SL_2(p)$ for some odd prime $p$. If $p-1$ is not a power of $2$, then $H$ has a non-abelian subgroup of order $pr$  where $r$ is an odd prime divisor of $p-1$  which   is centralized by $Z$. Hence $p $ is a Fermat prime.

Suppose that $G>H$ with $H \cong \SL_2(p)$, $p$ a Fermat prime. Note $G/H$ has order $2$.  Let $S \in \Syl_2(G)$.  Then $S \cap H$ is a quaternion group.
Suppose that $S$ is not quaternion  Then there is an involution  $t \in S \setminus H$. By the Baer-Suzuki Theorem, there exists a dihedral group $D$ of order $2r$ for some odd prime $r$ which contains $t$. Since $D$ and $Z$ commute, this is impossible. Hence $S$ is quaternion. This gives the structure described in the lemma.

It remains to demonstrate that the groups $\SL_2(p)$ and $\SL_2(p)\udot 2$ with $p$ a Fermat prime are indeed $\X$-groups. Let $G$ denote one of these group, $H =F^*(G)\cong \SL_2(p)$. Recall from the comments just after the statement of Theorem~\ref{one} that $G$ is isomorphic to a subgroup of $X=\SL_2(p^2)$. Let $V$ be the natural $\GF(p^2)$ representation of $X$ and thereby a representation of $G$.  Assume that $L \le G$ is non-cyclic. Since $H$ has no abelian  subgroups which are not cyclic, $L$ is non-abelian and $L$ acts irreducibly on $V$. Schur's Lemma implies that $C_{X}(L)$ consists of scalar matrices and so has order at most $2$. If $L$ has even order, then as $G$ has quaternion Sylow $2$-subgroups, $L \ge C_G(L)$.  So suppose that $L$ has odd order.  Then using Dickson's Theorem \cite[260, page 285]{Dickson58}, as $p$ is a Fermat prime, we find that $L$ is cyclic, a contradiction.  Thus $G$ is an $\X$-group.
\end{proof}

\begin{proof}[Proof of Theorem~$\ref{one}$]  This follows from the combination of the lemmas in this section.
\end{proof}

%%%%%%%%%%%%%%%%%%%%%%%%%%%%%%%%%%%%%%%%%%%%%%%%%%%%%%%%%%%%%%%%%%%%%%%
\section{Infinite locally finite $\X$-groups}
\label{s:locallyfinite}
%%%%%%%%%%%%%%%%%%%%%%%%%%%%%%%%%%%%%%%%%%%%%%%%%%%%%%%%%%%%%%%%%%%%%%%

It has been proved in \cite[Theorem 2.2]{Del13} that an infinite abelian group is in the class $\X$ if and only if it is either cyclic or isomorphic to $\Z_{p^\infty}$  (the Pr\"ufer $p$-group) for some prime $p$. Moreover, Theorem 2.3 and Theorem 2.5 of \cite{Del13} imply that  every infinite nilpotent $\X$-group is abelian. We start this section by showing that some extensions of infinite abelian $\X$-groups provide further examples of infinite $\X$-groups.

\begin{lemma}
\label{l:dihedral}
The infinite dihedral group belongs to the class $\X$.
\end{lemma}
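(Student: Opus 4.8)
The plan is to show directly that every non-cyclic subgroup $H$ of $D = D_\infty = \langle r \rangle \rtimes \langle s \rangle$ (with $r$ of infinite order and $s$ an involution inverting $r$) satisfies $C_D(H) \le H$. The key structural fact I would invoke is the well-known classification of subgroups of $D_\infty$: every subgroup is either contained in the translation subgroup $\langle r \rangle$ (hence infinite cyclic or trivial), or is itself infinite dihedral, or is generated by a single reflection (hence of order $2$). So the subgroups of $D$ are exactly: the trivial group, the infinite cyclic groups $\langle r^n \rangle$, the order-$2$ groups $\langle r^k s \rangle$, and the infinite dihedral subgroups $\langle r^n, r^k s \rangle$.

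The proof then splits into cases according to this list. If $H$ is non-cyclic, the classification forces $H$ to be infinite dihedral, say $H = \langle r^n, r^k s\rangle$ with $n \ge 1$. I would then compute $C_D(H)$. First, $C_D(r^n)$: an element $r^j$ commutes with $r^n$ always, while a reflection $r^k s$ inverts $r^n$, so it centralizes $r^n$ only if $r^{2n} = 1$, which is impossible; hence $C_D(r^n) = \langle r\rangle$. Intersecting with the centralizer of the reflection $r^k s$ inside $\langle r \rangle$: an element $r^j$ commutes with $r^k s$ iff $r^j \cdot r^k s = r^k s \cdot r^j = r^{k-j}s$, i.e. iff $r^{2j} = 1$, i.e. iff $j = 0$. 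Therefore $C_D(H) = C_D(r^n) \cap C_D(r^k s) = \langle r \rangle \cap \{1\} = 1 \le H$, as required. Since this is the only case in which $H$ is non-cyclic, this establishes that $D$ is an $\X$-group.

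I expect the only mild obstacle to be making the subgroup classification of $D_\infty$ precise enough to be cited cleanly — but this is entirely standard (it follows from the fact that $\langle r \rangle$ is the unique maximal torsion-free-quotient-giving subgroup, or simply from the Kurosh/Bass–Serre description of $D_\infty \cong \Z_2 * \Z_2$), so in the write-up I would either quote it as well known or give the two-line argument: a subgroup $H$ either lies in $\langle r \rangle$, or contains some reflection $w$; modding out, $H \cap \langle r\rangle$ is $\langle r^n \rangle$ for some $n \ge 0$, and $H = (H \cap \langle r \rangle)\langle w \rangle$, which is dihedral of order $2$ (if $n = 0$) or infinite dihedral (if $n \ge 1$). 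Everything else is the short centralizer computation above, and there are no convergence or local-finiteness subtleties since $D_\infty$ is countable and locally finite-by-cyclic — indeed its finite subgroups are exactly the trivial and order-$2$ ones, which trivially forces attention onto the dihedral subgroups.
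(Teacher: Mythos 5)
Your proof is correct and follows essentially the same route as the paper: every non-cyclic subgroup of the infinite dihedral group must contain a non-trivial translation and a reflection, and the centralizer of such a pair is trivial. Your write-up simply makes explicit the subgroup classification and the two centralizer computations that the paper leaves as "it easily follows."
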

\begin{proof}  Write $G=\langle a,y\,|\,y^2=1, a^y=a^{-1}\rangle$. Then for every non-cyclic subgroup $H$ of $G$ there exist non-zero integers $n$ and $m$ such that $a^n, a^my\in H$. It easily follows that $C_G(H)= 1 $.
\end{proof}

\begin{lemma}
\label{l:2adic}
Let $G=A\langle y\rangle$ where $A\cong\Z_{2^\infty}$ and $\langle y\rangle$ has order $2$ or $4$, with $y^2\in A$ and $a^y=a^{-1}$, for all $a\in A$. Then $G$ belongs to the class $\X$.
\end{lemma}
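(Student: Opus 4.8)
The plan is to use Lemma~\ref{l:one}: it suffices to check that $C_G(x)$ is an $\X$-group for every element $x\in G$ of prime order. Since $A\cong\Z_{2^\infty}$ contains a unique involution $z$, and $z$ is central in $G$ (because $z^y=z^{-1}=z$), we have $C_G(z)=G$ itself, so that case is trivial in form but of course not in content; the real work is with involutions (and, when $|\langle y\rangle|=4$, elements of order $4$ are not of prime order, so they need not be considered directly, though they arise as elements of the centralizers we must analyze). Concretely, I would split into the two cases $|\langle y\rangle|=2$ and $|\langle y\rangle|=4$.

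First suppose $|\langle y\rangle|=2$, so $G=A\rtimes\langle y\rangle$ is the locally dihedral $2$-group $\Z_{2^\infty}\rtimes 2$. Every element outside $A$ has the form $ay$ with $a\in A$, and $(ay)^2=a\,a^y=a a^{-1}=1$, so all elements outside $A$ are involutions. For such an involution $t=ay$, I compute $C_G(t)$: an element $b\in A$ centralizes $t$ iff $b=b^y\cdot$(conjugation relation), i.e. iff $btb^{-1}=t$, which forces $b^2=1$, so $b\in\{1,z\}$; and one checks $t$ itself together with $z$ generates a Klein four-group, so $C_G(t)=\langle t,z\rangle\cong\Z_2\times\Z_2$, which is a finite dihedral (elementary abelian) group and hence an $\X$-group (it appears in Theorem~\ref{one}(1.2), or trivially every group of order $4$ is an $\X$-group). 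Thus every centralizer of a prime-order element is an $\X$-group, and Lemma~\ref{l:one} finishes this case. (Alternatively, one can mimic Lemma~\ref{l:dihedral} directly: a non-cyclic $H\le G$ must contain two distinct involutions outside $A$ whose product is a nontrivial element of $A$, or contain an element of $A$ of order $\ge4$ together with an outer involution, and in either case $C_G(H)=1$.)

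Now suppose $|\langle y\rangle|=4$ with $y^2\in A$; then $y^2$ is the involution $z$ of $A$ (it is the unique element of order $2$, and $y^2\ne1$ since $y$ has order $4$), so $G$ is the locally quaternion $2$-group, the union of the chain of generalized quaternion groups $\langle a,y\rangle$ with $a$ ranging over elements of $A$ of increasing $2$-power order. Here the only involution of $G$ is $z=y^2$, which is central, so $C_G(z)=G$; but $G$ is not among the groups we are trying to recognize as $\X$-groups by a smaller centralizer — instead I argue directly that $G$ is an $\X$-group. Let $H\le G$ be non-cyclic; I claim $C_G(H)\le\langle z\rangle$, which is contained in $H$ once we show $z\in H$. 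In a (generalized) quaternion group every abelian subgroup is cyclic, and this passes to the direct limit, so $H$ non-cyclic forces $H$ non-abelian, whence $H$ contains two elements that fail to commute; examining the structure, $H\cap A$ is a nontrivial (hence infinite, being a nontrivial subgroup of $\Z_{2^\infty}$ of $2$-power-bounded-torsion type, i.e. $H\cap A\cong\Z_{2^\infty}$ or a finite cyclic $2$-group — in fact any subgroup $H$ with an element outside $A$ of order $\ge4$ meets $A$ in something containing $z$). I would show $z\in H$: if $h\in H\setminus A$, then $h^2\in A$ and $h^2$ is a nontrivial $2$-element (as $h$ has order a power of $2$, at least $4$ since there are no non-central involutions), so $z=h^{2^k}\in H$ for suitable $k$; and if $H\le A$ then $H$ is cyclic or $\Z_{2^\infty}$, hence cyclic — contradiction — unless $H\cong\Z_{2^\infty}$, but that is also cyclic in the relevant sense? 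No: $\Z_{2^\infty}$ is not cyclic, so I must also handle $H\cong\Z_{2^\infty}\le A$, where $C_G(H)=A$ (since $y$ inverts $A$ nontrivially) which is \emph{not} contained in $H$ — so I must rule this out or reexamine. The resolution: $C_G(H)=A=C_G(A)$, and $A\not\le H$ would break the $\X$-property, so I need to double-check that $C_G(A)=A$: indeed $y\notin C_G(A)$ since $a^y=a^{-1}\ne a$ for $a\ne1,z$, and similarly no outer element centralizes $A$, so $C_G(A)=A$ and for $H=A\cong\Z_{2^\infty}$ we get $C_G(H)=H$, which \emph{is} the required containment $C_G(H)\le H$. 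Good — so abelian non-cyclic $H$ means $H=A$ (or a conjugate/translate with the same centralizer), and the $\X$-condition holds with equality. For non-abelian $H$ we have shown $z\in H$ and $C_G(H)\le C_G(z)=G$ is too weak, so instead: $H$ non-abelian means $H$ contains some $a\in A$ of order $\ge4$ together with an outer $h$, and then $C_G(H)\le C_G(\langle a,h\rangle)$; since $C_A(h)=\langle z\rangle$ and $C_G(a)=A$, we get $C_G(H)\le A\cap C_G(h)=\langle z\rangle\le H$.

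The step I expect to be the main obstacle is the bookkeeping in the order-$4$ case: correctly identifying which non-cyclic subgroups $H$ are abelian (forcing $H=A$, where the centralizer equals $H$) versus non-abelian (where one must extract the central involution and pin the centralizer down to $\langle z\rangle$), and making sure no subgroup $H$ with $C_G(H)\not\le H$ slips through — in particular handling $H\cong\Z_{2^\infty}$ sitting inside $A$, where the inclusion $C_G(H)\le H$ holds only because it is an \emph{equality}. Once that case analysis is clean, invoking Lemma~\ref{l:one} (or arguing directly as above) completes the proof that $G\in\X$.
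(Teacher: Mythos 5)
Your treatment of the case $|\langle y\rangle|=4$ is, after the self-corrections, essentially correct and in fact coincides with the paper's argument: for a non-cyclic $H\ne A$ you pick $h\in H\setminus A$, note that $h$ inverts $A$, so $C_G(h)=\langle h\rangle$ has order $4$ and $C_G(H)\le C_G(h)\le H$ (using $h^2=z\in H$), while for $H=A$ one has $C_G(H)=C_G(A)=A=H$. The paper runs exactly this computation, uniformly for both values of $|\langle y\rangle|$, via the Dedekind decomposition $H=(A\cap H)\langle h\rangle$.

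The case $|\langle y\rangle|=2$, however, contains a genuine gap. Lemma~\ref{l:one} cannot serve as the engine there: the unique involution $z$ of $A$ is central, so $C_G(z)=G$, and the hypothesis of Lemma~\ref{l:one} for $x=z$ is precisely the statement you are trying to prove; checking the criterion only for the non-central involutions, as you do, does not discharge it, so the sentence ``Thus every centralizer of a prime-order element is an $\X$-group, and Lemma~\ref{l:one} finishes this case'' is circular (there is also the smaller point that Lemma~\ref{l:one} is stated for finite groups, though its proof extends to periodic ones). Your parenthetical fallback does not repair this as written: the dichotomy omits $H=A$, and the asserted conclusion $C_G(H)=1$ is false --- for the Klein four subgroup $H=\langle z,ay\rangle$ one has $C_G(H)=H\ne 1$, and for $H=A$ one has $C_G(H)=A\ne 1$. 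What is true, and all that is needed, is the containment $C_G(H)\le H$, and the correct argument is the one you already gave in the quaternion case: if $H\ne A$ is non-cyclic then $H\not\le A$, any $h\in H\setminus A$ inverts $A$, so $C_G(h)=\langle z,h\rangle$ has order $4$; since $A\cap H$ is non-trivial (otherwise $H=\langle h\rangle$ would be cyclic) it contains $z$, whence $C_G(H)\le C_G(h)=\langle z,h\rangle\le H$. With the first case rewritten along these lines the proof is complete and agrees with the paper's.
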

\begin{proof}
It is clear that $G/A$ has order $2$, and $A$ is the Fitting subgroup of $G$. Also $C_G(A)=A$  and $Z(G)$ is the subgroup of order $2$ of $A$. Let   $H$ be a non-cyclic subgroup of $G$ with $H\ne A$. Then $H \not \le A$ as every proper subgroup of $A$ is cyclic.  Pick any element $h\in H\setminus A$.
Then $G=A\langle h\rangle$ since $|G:A|=2$. Therefore by the Dedekind modular law we get $H=C\langle h\rangle$, where $C=A\cap H>1$ is finite.

Since $h=bv$ with $b\in A$ and $v\in\langle y\rangle \setminus A$, we get $a^h=a^{-1}$ for all $a\in A$.  In particular,  $C_A(h)$ has order $2$ and $C_G(h)$ has order $4$.  Since $C$ has a unique involution and $h \in C_G(H)$, we conclude that $C_G(H) \le H$ and so $G$ is an $\X$-group.
\end{proof}
When $\langle y\rangle$ has order $2$,   the group $G=A\rtimes\langle y\rangle$ of Lemma~\ref{l:2adic} is a generalized dihedral group.

Let   $p$ denote any odd prime. Then, by Hensel's Theorem (see for instance \cite[Theorem 127.5]{Fuc73}), the group $\Z_{p^\infty}$ has an automorphism of order $p-1$, say $\phi$.

\begin{lemma}
\label{l:padic}
The groups $G=\Z_{p^\infty}\rtimes\langle \phi^j\rangle$ for $1 \le j\le p-1$   are  $\X$-groups.
\end{lemma}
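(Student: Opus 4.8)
The plan is to realise $G$ as a generalised Frobenius group with kernel $A:=\Z_{p^\infty}$ and to pin down its centralizers directly. Write $\psi=\phi^j$, so $G=A\rtimes\langle\psi\rangle$ with $\langle\psi\rangle$ finite cyclic. If $j=p-1$ then $\psi=1$ and $G=A$ is an $\X$-group by \cite[Theorem~2.2]{Del13}, so assume $1\le j<p-1$. The key fact I would establish first is that every non-trivial power of $\psi$ acts on $A$ with no non-trivial fixed point. To prove this, identify $\Aut(A)$ with the $p$-adic unit group $\Z_p^\times$, so that $\phi$ becomes multiplication by a primitive $(p-1)$-th root of unity $u$, an element whose image modulo $p$ is a primitive root. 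Then $\psi^k\ne 1$ forces $u^{jk}\not\equiv 1\pmod p$, so $1-u^{jk}$ is a $p$-adic unit; that is, $1-\psi^k$ is an automorphism of $A$, which is precisely the assertion that $C_A(\psi^k)=1$.

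Two consequences then follow immediately. Since $A$ is abelian while no non-trivial power of $\psi$ fixes any non-trivial element of $A$, for every non-trivial subgroup $B\le A$ we have $C_G(B)=A$: elements of $A$ centralize $B$, and an element of $G\setminus A$ acts on $A$ by conjugation as a non-trivial power of $\psi$, so cannot. The same observation shows that $C_A(g)=1$ for every $g\in G\setminus A$.

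Finally, let $H\le G$ be non-cyclic; I must show $C_G(H)\le H$. If $H\le A$, then $H=A$, since every proper subgroup of $\Z_{p^\infty}$ is cyclic, and $C_G(H)=C_G(A)=A=H$. Otherwise $H$ contains some $g\in G\setminus A$. If $H\cap A=1$, then $H$ embeds in $G/A\cong\langle\psi\rangle$ and is cyclic, contrary to hypothesis; hence $B:=H\cap A$ is a non-trivial subgroup of $A$. Any $z\in C_G(H)$ then centralizes $B$, so $z\in C_G(B)=A$, and also centralizes $g$, so $z\in C_A(g)=1$; therefore $C_G(H)=1\le H$ and $G$ is an $\X$-group. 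The only genuine obstacle is the fixed-point-free claim, resting on the arithmetic of $p$-adic units; the rest is routine, the sole subtlety being that $A$ is infinite, which is exactly why a non-cyclic subgroup not contained in $A$ must meet $A$ non-trivially. (Alternatively, $G$ is periodic, so the proof of Lemma~\ref{l:one} goes through unchanged, and one only needs the easy computation of $C_G(x)$ for $x$ of prime order — it is $A$ when $|x|=p$, and a conjugate of the cyclic group $\langle\psi\rangle$ otherwise — but the direct route above is shorter.)
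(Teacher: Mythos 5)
Your proof is correct, and its overall skeleton matches the paper's: establish that the non-trivial elements of the complement act fixed point freely on $A\cong\Z_{p^\infty}$, then deduce that a non-cyclic $H\not\le A$ meets $A$ non-trivially and has trivial centralizer, while $H\le A$ forces $H=A$. The difference lies in how the key fixed-point-freeness is obtained and in the treatment of the exponent $j$. The paper first reduces to $j=1$ by noting that $\X$ is subgroup closed, and then argues abstractly: a non-trivial fixed point would yield a fixed element of order $p$, hence $\phi^k$ would centralize all elements of order $p$ in $A$ and would be trivial by a result of Baer \cite[Lemma 3.28]{Rob72}. You instead work with all $j$ at once and compute inside $\Aut(A)\cong\Z_p^\times$: writing $\phi$ as multiplication by a unit $u$ of order $p-1$ whose reduction mod $p$ is a primitive root, you observe that $u^{jk}\ne 1$ forces $1-u^{jk}$ to be a $p$-adic unit, so $C_A(\phi^{jk})=1$. (The claim that $u$ reduces to a primitive root is where oddness of $p$ enters, via the torsion-freeness of $1+p\Z_p$ or directly from Hensel's construction; it is standard and correct.) Your route is more explicit and self-contained on the arithmetic side but uses the identification of $\Aut(\Z_{p^\infty})$ with the $p$-adic units, whereas the paper's route is softer, quoting Baer's lemma and avoiding any computation; both are complete proofs.
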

\begin{proof} As $\X$ is subgroup closed, it suffices to show that $G=\Z_{p^\infty}\rtimes\langle \phi\rangle$ is an $\X$-group.
Write the elements of $G$ in the form $ay$ with $a\in A\cong\Z_{p^\infty}$ and $y\in\langle\phi\rangle$. Suppose there exist non-trivial elements $a\in A$ and $y\in \langle\phi\rangle$ such that $a^y=a$. For a suitable non-negative integer $n$, the element $a^{p^n}$ has order $p$ and it is fixed by $y$. Then $y$ centralizes  all elements of order $p$ in  $A$, and therefore $y=1$ by a result due to Baer (see, for instance, \cite[Lemma 3.28]{Rob72}). This contradiction shows that $\langle\phi\rangle$ acts fixed point freely on $A$.

Let $H$ be any non-cyclic subgroup of $G$. Then, as $G/A$ is cyclic, $A\cap H\ne 1$.  If $H=A$ then of course $C_G(H)=H$. Thus we can assume that there exist non-trivial elements $a,b\in A$ and  $y\in \langle\phi\rangle$ such that $a,by\in H$. Let $g\in C_G(H)$. If $g\in A$ then $1=[g,by]=[g,y]$, so $g=1$. Now let $g=cz$ with $c\in A$ and $1\ne z\in\langle\phi\rangle$. Thus $1=[cz,a]=[z,a]$, and $a=1$, a contradiction. Therefore $C_G(H)\leq H$ for all non-cyclic subgroups $H$ of $G$, so $G$ is an $\X$-group.
\end{proof}

\begin{lemma}
\label{t:polycyclic}
An infinite polycyclic group belongs to the class $\X$ if and only if it is either cyclic or dihedral.
\end{lemma}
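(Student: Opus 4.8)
The reverse implication is immediate: the infinite cyclic group has no non-cyclic subgroup at all, so it lies in $\X$, and the infinite dihedral group lies in $\X$ by Lemma~\ref{l:dihedral}. For the forward implication, let $G$ be an infinite polycyclic $\X$-group and put $F=F(G)$. Since $G$ is soluble and non-trivial, $F\neq 1$, and since $G$ is polycyclic, $F$ is a finitely generated nilpotent group with $C_G(F)\le F$ (a well-known property of the Fitting subgroup of a polycyclic group). The plan is to show that $F$ must be infinite cyclic and that $|G:F|\le 2$; the two conclusions of the lemma then drop out of a one-line computation.

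The first step is to observe that $F$ is infinite: if it were finite, then $\Aut(F)$ would be finite, and since $G/C_G(F)$ embeds into $\Aut(F)$ while $C_G(F)\le F$ is finite, $G$ would be finite, a contradiction. So $F$ is an infinite finitely generated nilpotent group, and being a subgroup of $G$ it is itself an $\X$-group. By the results on nilpotent and abelian $\X$-groups recalled at the start of this section (Theorems 2.2, 2.3 and 2.5 of \cite{Del13}), an infinite nilpotent $\X$-group is abelian, and an infinite abelian $\X$-group is cyclic or a Pr\"ufer group $\Z_{p^\infty}$; as $F$ is finitely generated, the Pr\"ufer possibility is excluded, so $F=\langle a\rangle$ is infinite cyclic.

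Since $F$ is abelian we now have $F\le C_G(F)\le F$, hence $C_G(\langle a\rangle)=\langle a\rangle$, and therefore $G/\langle a\rangle$ embeds into $\Aut(\langle a\rangle)\cong\ztwo$. If $G=\langle a\rangle$, then $G$ is infinite cyclic. Otherwise $|G:\langle a\rangle|=2$; choosing $y\in G\setminus\langle a\rangle$, the element $y$ does not centralize $a$, so $a^y=a^{-1}$, and writing $y^2=a^k\in\langle a\rangle$ we get $a^k=(a^k)^y=a^{-k}$, forcing $k=0$, that is, $y^2=1$. Thus $G=\langle a\rangle\rtimes\langle y\rangle$ with $a^y=a^{-1}$ and $y^2=1$, which is the infinite dihedral group. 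The only substantial inputs are the cited classifications from \cite{Del13} and the fact that the Fitting subgroup of a polycyclic group contains its centralizer; the latter is precisely what rules out a finite Fitting subgroup, and is the one non-routine point of the argument, everything past the identification $F\cong\Z$ being a routine computation.
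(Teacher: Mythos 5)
Your proof is correct. Since the paper's own proof of this lemma is essentially a pointer --- it says the forward direction follows by ``arguing as in the proof of Theorem 3.1 of \cite{Del13}'' and handles the converse via Lemma~\ref{l:dihedral} --- your write-up takes a genuinely more self-contained route, though one very much in the spirit of the paper's later proof of Theorem~\ref{t:soluble}: pass to the Fitting subgroup $F$, use $C_G(F)\le F$ (valid for polycyclic groups, as you note; the paper invokes the analogous fact for its soluble $\X$-groups), rule out $F$ finite via the embedding $G/C_G(F)\hookrightarrow\Aut(F)$, then invoke the cited classifications from \cite{Del13} (infinite nilpotent $\X$-groups are abelian, infinite abelian $\X$-groups are cyclic or Pr\"ufer) together with finite generation to get $F\cong\mathbb Z$, and finish with the elementary computation forcing $|G:F|\le 2$, $a^y=a^{-1}$ and $y^2=1$. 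All steps check out, including the observation that the converse for the infinite cyclic group is vacuous and for the infinite dihedral group is Lemma~\ref{l:dihedral}. What your version buys is independence from the unseen argument of \cite[Theorem 3.1]{Del13}, at the modest cost of a longer proof resting on the standard fact that the Fitting subgroup of a polycyclic group contains its centralizer --- a fact the paper relies on implicitly elsewhere in any case.
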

\begin{proof} Arguing as in the proof of Theorem 3.1 of \cite{Del13}, one can easily prove that every infinite polycyclic $\X$-group is either cyclic or dihedral. On the other hand, the infinite dihedral group belongs to the class $\X$ by Lemma~\ref{l:dihedral}.
\end{proof}

\begin{proposition}
\label{t:torsionfreesoluble}
A torsion-free soluble group belongs to the class $\X$ if and only if it is cyclic.
\end{proposition}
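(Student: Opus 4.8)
The plan is to show that a torsion-free soluble $\X$-group $G$ is abelian, and then invoke the classification of abelian $\X$-groups (from \cite[Theorem 2.2]{Del13}, recalled at the start of this section): a torsion-free abelian $\X$-group must be cyclic, since $\Z_{p^\infty}$ has torsion. The converse is trivial, as an infinite cyclic group is clearly an $\X$-group. So the whole content is the forward direction, and everything reduces to proving that $G$ is abelian.

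First I would reduce to the case where $G$ is non-trivial and pick a non-trivial element $a \in G$. Since $G$ is soluble, the derived series terminates; let $A$ be the last non-trivial term of the derived series (or more flexibly, work with a non-trivial abelian normal subgroup $A \trianglelefteq G$, which exists since $G$ is soluble). As a subgroup of $G$, the group $A$ is itself a torsion-free abelian $\X$-group, hence cyclic by the remarks above; write $A = \langle a \rangle \cong \Z$. Now $G$ acts on $A$ by conjugation, and $\Aut(\Z) \cong \Z_2$, so $C = C_G(A)$ has index at most $2$ in $G$. If $G = C$ then $A$ is central; otherwise there is $g \in G \setminus C$ with $a^g = a^{-1}$.

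The key step is to rule out the non-central case, or rather to bootstrap it. Suppose some non-trivial element $a$ has $C_G(\langle a \rangle)$ of index $2$, with $a^g = a^{-1}$ for $g \notin C_G(\langle a \rangle)$. Then $g^2 \in C_G(\langle a\rangle)$. Consider the subgroup $H = \langle a, g^2 \rangle$. Since $g^2$ centralizes $a$, this $H$ is abelian. It is non-cyclic unless $g^2 \in \langle a \rangle$; but a torsion-free abelian group is cyclic, so if $H$ is non-cyclic it cannot be an $\X$-group (a non-cyclic torsion-free abelian group is never self-centralizing inside itself — take any element and its centralizer in $H$ is all of $H$). Hence $g^2 \in \langle a\rangle$; torsion-freeness of $\langle g \rangle / (\langle g\rangle \cap \langle a\rangle)$ forces, after replacing $g$ by a suitable power or using that $g$ inverts $a$ so $g^2$ is inverted to itself, $g^2 = 1$, contradicting torsion-freeness unless $g = 1$. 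Thus $A \le Z(G)$. Now run the same argument with $G/A$: it is again torsion-free soluble (torsion-freeness of the quotient needs a small argument — it holds because $\langle a \rangle$ is isolated, which I would arrange by taking $A$ to be the isolator of a derived-series term, or equivalently passing to $G/\sqrt{A}$), and by induction on the derived length it is abelian, so $G' \le A \le Z(G)$, making $G$ nilpotent of class $\le 2$. A torsion-free nilpotent $\X$-group is abelian by \cite[Theorem 2.3, 2.5]{Del13} (every infinite nilpotent $\X$-group is abelian), completing the proof.

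The main obstacle is the bookkeeping around torsion-freeness of quotients: a quotient of a torsion-free group need not be torsion-free, so the inductive step on derived length requires choosing $A$ to be an \emph{isolated} (root-closed) abelian normal subgroup, so that $G/A$ is again torsion-free. This is a standard maneuver for torsion-free soluble groups — one takes the isolator of the last non-trivial term of the derived series, which is still abelian and normal and isolated — but it is the one place where a little care is needed. Once that is in place, the $\X$-condition does exactly what one wants: it forbids non-cyclic torsion-free abelian subgroups, which immediately collapses centralizers of single elements and forces centrality, and the induction closes.
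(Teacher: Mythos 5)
The decisive gap is in your inductive step: you apply the induction hypothesis (the proposition itself) to a quotient $G/A$, respectively $G/\sqrt{A}$, and for that you need the quotient to be a torsion-free soluble \emph{$\X$-group}. Solubility is clear and torsion-freeness is what your isolator manoeuvre is meant to arrange, but membership in $\X$ is not addressed at all, and it cannot be taken for granted: the class $\X$ is subgroup-closed but not quotient-closed. Concretely, the infinite dihedral group is an $\X$-group (Lemma~\ref{l:dihedral}), yet its quotient $\Dih(12)\cong 2\times\Sym(3)$ is not, since the central involution centralizes the non-cyclic subgroup $\Sym(3)$ without lying in it. So the sentence ``by induction on the derived length it is abelian'' is unjustified. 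The isolator step has its own problem: outside the torsion-free \emph{nilpotent} setting, the isolator of an abelian normal subgroup need not be abelian, nor even a subgroup, so torsion-freeness of the quotient is not secured in the way you suggest. There is also a local slip earlier: $\langle a,g^2\rangle$ being cyclic does not force $g^2\in\langle a\rangle$ (take $a=c^2$, $g^2=c$); the contradiction you want is nevertheless available, because $g$ normalizes the cyclic group $\langle a,g^2\rangle$ and, since it inverts $a\neq 1$, it must invert a generator, hence $g^2=(g^2)^g=g^{-2}$, so $g^4=1$ and $g=1$ by torsion-freeness. (Similarly, the parenthetical reason you give that a non-cyclic torsion-free abelian group fails to be an $\X$-group is off target -- centralizers of single elements are irrelevant -- but the fact itself follows from \cite[Theorem 2.2]{Del13}.)

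Both the gap and the detour through quotients are avoidable, and then your strategy becomes a correct argument genuinely different from the paper's. The paper proceeds globally: every abelian subgroup of a torsion-free $\X$-group is cyclic, so by Mal'cev's theorem $G$ satisfies the maximal condition and is polycyclic, whence Lemma~\ref{t:polycyclic} and torsion-freeness give that $G$ is cyclic. Your local argument closes if you induct on \emph{subgroups} rather than quotients: for derived length $d\ge 2$, the subgroup $G'$ is a torsion-free soluble $\X$-group (subgroup-closure) of derived length $d-1$, hence cyclic by induction; if $G'\neq 1$, then $G/C_G(G')$ embeds in $\Aut(\Z)\cong C_2$, and the repaired inversion argument above shows no element inverts a generator of $G'$, so $G'\le Z(G)$ and $G$ is nilpotent of class at most $2$; then the nilpotent results of \cite{Del13} (an infinite nilpotent $\X$-group is abelian) together with the abelian case force $G$ to be cyclic. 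As written, however, the inductive step does not go through.
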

\begin{proof} Let $G$ be a torsion-free soluble $\X$-group. Then every abelian subgroup of $G$ is cyclic, so $G$ satisfies the maximal condition on subgroups by a result due to Mal'cev (see, for instance, \cite[15.2.1]{Rob96}). Thus $G$ is polycyclic by \cite[5.4.12]{Rob96}. Therefore $G$ has to be cyclic.
\end{proof}

In next theorem we determine all infinite soluble $\X$-groups.

\begin{theorem}
\label{t:soluble}
Let $G$ be an infinite soluble group. Then $G$ is an $\X$-group if and only if one of the following holds:
\begin{enumerate}
\item $G$ is cyclic;
\item $G\cong\Z_{p^\infty}$ for some prime $p$;
\item $G$ is dihedral;
\item $G=A\langle y\rangle$ where $A\cong\Z_{2^\infty}$ and $\langle y\rangle$ has order $2$ or $4$, with $y^2\in A$ and $a^y=a^{-1}$, for all $a\in A$;
\item $G\cong A\rtimes D$, where $A\cong\Z_{p^\infty}$ and $ 1 \neq D\leq C_{p-1}$ for some odd prime $p$.
\end{enumerate}
\end{theorem}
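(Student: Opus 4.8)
The plan is to prove Theorem~\ref{t:soluble} by reducing the classification of infinite soluble $\X$-groups to the infinite locally finite and torsion-free cases that have essentially already been settled, and then to identify precisely which extensions survive. The easy direction is that each of the five families is an $\X$-group: (i) and (ii) are immediate (cyclic groups and $\Z_{p^\infty}$ have only cyclic proper subgroups, so the condition is vacuous on non-cyclic $H=G$ — and in fact $\Z_{p^\infty}$ has no non-cyclic subgroup at all), (iii) is Lemma~\ref{l:dihedral}, (iv) is Lemma~\ref{l:2adic}, and (v) is Lemma~\ref{l:padic}. So the substance is the forward direction.

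For the forward direction, let $G$ be an infinite soluble $\X$-group. First I would dispose of the torsion-free case: by Proposition~\ref{t:torsionfreesoluble}, such a $G$ is cyclic, giving (i). So assume $G$ has a non-trivial element of finite order, i.e. the torsion subset is non-empty. The key structural observation is that every abelian subgroup of $G$ is either cyclic or isomorphic to $\Z_{p^\infty}$: indeed an abelian $\X$-group must have this form by \cite[Theorem 2.2]{Del13} (applied to the subgroup, which is again an $\X$-group since $\X$ is subgroup closed). In particular $G$ has finite torsion-free rank, and its torsion subgroup $T$ is a (possibly trivial, but here non-trivial) locally finite characteristic subgroup whose abelian subgroups are cyclic or Prüfer. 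I would then argue that $T$ contains a subgroup $A\cong\Z_{p^\infty}$ for a single prime $p$: a locally finite soluble group all of whose abelian subgroups are Chernikov-type must itself be Chernikov (using that a locally finite soluble group with min on abelian subgroups satisfies min, e.g. via \cite[14.3.1]{Rob96} or the Čarin/Šunkov-type results), hence $T$ is Chernikov, so its divisible part is a direct product of finitely many Prüfer groups; if two distinct Prüfer summands occurred, or two summands for the same prime, $G$ would contain $\Z_{p^\infty}\times\Z_{q^\infty}$ or $\Z_{p^\infty}\times\Z_{p^\infty}$, whose non-cyclic subgroups are far from self-centralizing — so there is exactly one, call it $A\cong\Z_{p^\infty}$, and $A\trianglelefteq G$ (it is characteristic in $T$, which is characteristic in $G$).

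Having isolated $A\cong\Z_{p^\infty}\trianglelefteq G$, the remaining work is to pin down $G/A$ and the action. Since $C_G(A)$ is abelian-by-(the condition)... more precisely: $A$ is a non-cyclic $\X$-subgroup only if $A=G$; so if $A<G$ I would look at $C_G(A)$. If $C_G(A)>A$, pick $x\in C_G(A)\setminus A$; then $\langle A,x\rangle$ is non-cyclic abelian-by-cyclic and $A\le C_G(\langle A,x\rangle)$ forces $A\le\langle A,x\rangle$, fine, but one shows $\langle A,x\rangle$ has an abelian non-cyclic subgroup unless $x$ has finite order and $A\langle x\rangle$ is ``too abelian'' — this is where one forces $C_G(A)=A$, hence $G/A\hookrightarrow\Aut(\Z_{p^\infty})$. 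For $p$ odd, $\Aut(\Z_{p^\infty})\cong\Z_p^\times\cong\Z_{p-1}\times\Z_{p^\infty}$(additively $\Z_p$); the pro-$p$ part cannot occur (a non-trivial element of it fixes all elements of order $p$, contradicting fixed-point-freeness forced by the $\X$-condition exactly as in Lemma~\ref{l:padic}, via Baer's lemma \cite[Lemma 3.28]{Rob72}), and moreover any non-trivial subgroup of the complement $\Z_{p-1}$ does act fixed-point-freely, so $G\cong A\rtimes D$ with $1\ne D\le C_{p-1}$, giving (v); one must also check no non-split extension survives, which follows since $\Z_{p-1}$ is a complement and $H^2$ considerations with divisible $A$ are trivial ($A$ is divisible, so the extension splits). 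For $p=2$, $\Aut(\Z_{2^\infty})\cong\Z_2^\times\cong\Z_2\times\Z_{2^\infty}$, the Prüfer part again dies, and the inversion automorphism $a\mapsto a^{-1}$ generates the relevant $\Z_2$; an argument on $G/A$ being an $\X$-group that is at most $\Z_2$-by-(something of exponent dividing $2$) — together with the constraint that $G$ itself, not just $G/A$, is an $\X$-group — forces $G/A$ of order $2$ and $G=A\langle y\rangle$ with $y^2\in A$, $a^y=a^{-1}$, $|\langle y\rangle|\in\{2,4\}$; the case $G/A\cong\Z_2^2$ is excluded because the kernel of inversion would give $C_G(A)>A$. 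This yields (iv), while the purely torsion-free-quotient cases collapse into (iii) via Lemma~\ref{t:polycyclic} when there is no infinite divisible torsion.

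The main obstacle, I expect, is the step showing $T$ (equivalently the locally finite radical) is Chernikov and then that it is \emph{exactly} one Prüfer group: ruling out $\Z_{p^\infty}\rtimes(\text{non-trivial finite }q\text{-stuff})$ for $q\ne p-1$-divisors, and especially ruling out a second Prüfer factor, requires a careful analysis of which irreducible actions of a finite group on $\bigcup\Z_{p^\infty}$ (i.e. on the Tate module / on $\bigcup$ torsion points) can be ``self-centralizing-compatible.'' The cleanest route is: reduce mod the condition to finitely many torsion layers, invoke Theorem~\ref{one} on each finite subquotient $N/N^{p^k}\rtimes G_0$, and pass to the limit; the finite classification (parts (1),(2),(3.1)) already shows the only finite $\X$-groups with an abelian normal $p$-subgroup admitting a compatible action that can ``grow'' are the cyclic-over-cyclic Frobenius ones and the $C_{p-1}$-actions — no quaternion or $\SL_2$ action extends to $\Z_{p^\infty}$ because those need the module to be $2$-dimensional over a fixed $\F_p$, which is incompatible with the divisible structure. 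Once that limiting argument is set up, (iv) and (v) drop out, and the odd-versus-even case split is exactly the dichotomy $\Aut(\Z_{p^\infty})$ cyclic-by-$\Z_{p^\infty}$ with cyclic part $C_{p-1}$ (odd $p$) versus $C_2$ (even $p$).
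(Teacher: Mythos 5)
The central gap is structural: you treat the set of torsion elements of $G$ as a characteristic subgroup $T$ and hang the whole case division on it (``torsion subset non-empty, hence $T\ne 1$, hence $T$ is Chernikov, hence its divisible part supplies a normal Pr\"ufer subgroup $A$''). In a soluble group the torsion elements need not form a subgroup, and the infinite dihedral group --- item (iii) of the very theorem you are proving --- is a counterexample: it contains involutions but has no non-trivial normal (let alone characteristic) periodic subgroup, so there is no such $T$ and certainly no Pr\"ufer subgroup inside it. As written, your argument asserts something false for one of the target groups, and your only provision for this situation is the closing clause that the case of ``no infinite divisible torsion'' collapses to (iii) via Lemma~\ref{t:polycyclic}; but to invoke that lemma you must first show $G$ is polycyclic, which you never do (it would follow from Mal'cev's theorem once one knows every abelian subgroup of $G$ satisfies max, i.e.\ that $G$ contains no Pr\"ufer subgroup --- but that is not the dichotomy you set up). The paper avoids all of this with a simpler device: take $A$ to be the Fitting subgroup, show every normal nilpotent subgroup is abelian (a non-abelian one is non-cyclic, hence self-centralizing, which forces $G$ to be finite when it is finite and contradicts the classification of infinite nilpotent $\X$-groups when it is infinite), deduce that $A=C_G(A)$ is infinite abelian, hence infinite cyclic or isomorphic to $\Z_{p^\infty}$, and that $G/A$ is abelian; the infinite cyclic case yields (i)/(iii) and the Pr\"ufer case yields (ii), (iv), (v). Normality of the Pr\"ufer subgroup is then automatic, whereas on your route it rests on the non-existent $T$.

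In the Pr\"ufer half there are further slips, though these are repairable. Your elimination of the pro-$p$ part of $\Aut(\Z_{p^\infty})$ ``via Baer's lemma'' does not work: a unit congruent to $1$ modulo $p$ acts trivially on the socle but has infinite order, and Baer's result as used in Lemma~\ref{l:padic} applies only to automorphisms of finite order coprime to $p$. What actually kills the pro-$p$ part (and is what the paper does) is first proving that $G$ is periodic: an element $x$ of infinite order has a power commuting with a non-trivial finite subgroup of $A$, producing a mixed non-cyclic abelian subgroup and contradicting \cite[Theorem 2.2]{Del13}; then $G/A$ is a periodic subgroup of $\Aut(\Z_{p^\infty})$ and so lies in its torsion part, namely $C_{p-1}$ for $p$ odd and $C_2$ for $p=2$ (note also that $\Aut(\Z_{2^\infty})$ is $C_2$ times the additive $2$-adic integers, not $C_2\times\Z_{2^\infty}$ as you write). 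Moreover $C_G(A)=A$ is immediate, since $A$ is non-cyclic and abelian, so the convoluted discussion of that point is unnecessary; and the final paragraph's plan to pass to a limit over finite subquotients using Theorem~\ref{one} is a sketch rather than an argument --- fortunately it is also not needed.
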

\begin{proof} First let $G$ be an $\X$-group. If $G$ is abelian then (i) or (ii) holds by  \cite[Theorem 2.2]{Del13}. Assume $G$ is non-abelian, and let $A$ be the Fitting subgroup of $G$. Then $A\not=1$ and $C_G(A)\leq A$ as $G$ is soluble. Let $N$ be a nilpotent normal subgroup of $G$.
 Then $N$ is finite, as, otherwise, using $N$ is self-centralizing and $G/Z(N)$ is a
subgroup of $\Aut(N)$, we obtain $G$ is finite, which is a contradiction. Thus  \cite[Theorems 2.3
and 2.5]{Del13}  imply that $N$ is abelian. In particular, as the product of any two normal nilpotent subgroups of $G$ is again a normal nilpotent subgroup by Fitting's Theorem, we see that the generators of $A$ commute. Hence $A$ is abelian.   As $A$ is infinite and abelian, $A=C_G(A)$ is either infinite cyclic or isomorphic to $\Z_{p^\infty}$ for some prime $p$.  In the former case clearly $G'\leq A$. In the latter case, let $C$ be any proper subgroup of $A$. Thus $C$ is finite cyclic. Moreover $C$ is characteristic in $A$, so it is normal in $G$, and $G/C_G(C)$ is abelian since it is isomorphic to a subgroup of $\Aut(C)$. It follows that $G'\leq C_G(C)$, and again $G'\leq C_G(A)=A$. Therefore $G/A$ is abelian.

If $A$ is infinite cyclic, then the argument in the proof of Theorem 3.1 of \cite{Del13} shows that $G$ is dihedral. Thus (iii) holds.

Let  $A\cong\Z_{p^\infty}$ for some prime $p$, and suppose there exists an element $x\in G$ of infinite order. Then  $x\in G\setminus A$,  and so there exists an element $y\in A$ such that $[x,y]\not=1$.
Then $\langle y\rangle$ is a finite normal subgroup of $G$, so conjugation by $x$ induces  a non-trivial automorphism of $\langle y\rangle$. Since $\Aut(\langle y\rangle)$ is finite, it follows that there is an integer $n$ such that $[x^n,y]=1$. Now $y$ is a torsion element and $x^n$ has infinite order and so $\langle x^n,y\rangle$ is neither periodic nor torsion-free and this  contradicts   \cite[Theorems 2.2]{Del13}.  Therefore $G$ is periodic, and $G/A$ is isomorphic to a periodic subgroup of automorphisms of $\Z_{p^\infty}$.

It is well-known that $\Aut(\Z_{p^\infty})$ is isomorphic to the multiplicative group of all $p$-adic units. It follows that periodic automorphisms of $\Z_{p^\infty}$ form a cyclic group having order $2$ if $p=2$, and order $p-1$ if $p$ is odd (see, for instance, \cite{Has49} for details). In the latter case (v) holds. Finally, let $p=2$. Then $G/A=\langle yA\rangle$ has order $2$, and $G=A\langle y\rangle$ with $y\notin A$ and $y^2\in A$. Moreover $a^y=a^{-1}$, for all $a\in A$. If $y$ has order $2$ then $G=A\rtimes\langle y\rangle$. Otherwise from $y^2\in A$ it follows $y^2=(y^2)^y=y^{-2}$, so $y$ has order $4$. Therefore $G$ has the structure described in (iv).

On the other hand, Lemmas~\ref{l:dihedral} -- \ref{l:padic} show that the groups listed in (i) -- (v)  are  $\X$-groups. 
\end{proof}

Finally, we determine all infinite locally finite $\X$-groups.

\begin{theorem}
\label{t:locallyfinite}
Let $G$ be an infinite locally finite group. Then $G$ is an $\X$-group if and only if one of the following holds:
\begin{enumerate}
\item $G\cong\Z_{p^\infty}$ for some prime $p$;
\item $G=A\langle y\rangle$ where $A\cong\Z_{2^\infty}$ and $\langle y\rangle$ has order $2$ or $4$, with $y^2\in A$ and $a^y=a^{-1}$, for all $a\in A$;
\item $G\cong A\rtimes D$, where $A\cong\Z_{p^\infty}$ and $1\neq D\leq C_{p-1}$ for some odd prime $p$.
\end{enumerate}
\end{theorem}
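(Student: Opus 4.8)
The "if" direction is already handled by Lemmas~\ref{l:2adic} and \ref{l:padic} together with the fact that $\Z_{p^\infty}$ is an $\X$-group (recorded at the start of the section), so the substance is the "only if" direction. Let $G$ be an infinite locally finite $\X$-group. The first move is to show $G$ has a nontrivial locally nilpotent radical, or more directly, to locate an infinite abelian normal subgroup. Since every non-cyclic subgroup of $G$ is self-centralizing, every abelian subgroup of $G$ is either cyclic or (being contained in its own centralizer) has trivial centralizer complement — in particular every abelian subgroup is Chernikov-like; more usefully, one argues as in the finite case that $F^*$-type considerations force structure. The cleanest route: apply the classification of \emph{finite} $\X$-groups (Theorem~\ref{one}) to the finite subgroups of $G$ and take a direct limit.

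\textbf{Step 1: reduce to the soluble case.} I would first show $G$ is soluble, indeed locally a group of one of the shapes in Theorem~\ref{one}. The non-soluble finite $\X$-groups appearing in parts (3) and (4) of Theorem~\ref{one} have bounded order for each fixed $p$, and more importantly their composition factors are among $\Alt(5), \PSL_2(q)$ for a Fermat or Mersenne prime, $\Mat(10)$, etc.; in a locally finite $\X$-group $G$, if some finite subgroup $F$ is non-soluble then, enlarging $F$ inside $G$, one obtains finite $\X$-overgroups of unbounded order all of whose non-soluble sections are of this restricted type, and a short argument (any two such overgroups generate a finite $\X$-group, which must again be on the list) forces $G$ itself to be finite — contradiction. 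Hence every finite subgroup of $G$ is soluble, so $G$ is locally soluble. A locally soluble $\X$-group need not be soluble a priori, but: every finite subgroup lies in some finite soluble $\X$-group, and the soluble finite $\X$-groups of parts (1) and (2) of Theorem~\ref{one} have derived length at most $3$ (cyclic-by-abelian-by-cyclic roughly), so $G$ has bounded derived length, i.e.\ $G$ is soluble.

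\textbf{Step 2: invoke Theorem~\ref{t:soluble} and discard the torsion-free cases.} Now $G$ is an infinite soluble $\X$-group, so it is one of the five types in Theorem~\ref{t:soluble}. Types (i) and (iii) — infinite cyclic and infinite dihedral — are not locally finite (they contain an element of infinite order), so they are excluded. Types (ii), (iv), (v) are exactly the three conclusions claimed, and each is visibly locally finite (a locally finite group extended by a finite group, or $\Z_{p^\infty}$ itself). This completes the "only if" direction.

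\textbf{Main obstacle.} The delicate point is Step~1: ruling out that an infinite locally finite $\X$-group could have an infinite "tower" of finite non-soluble $\X$-subgroups. The key leverage is that the finite non-soluble $\X$-groups in Theorem~\ref{one}(3)--(4) are \emph{not} subgroup-nested in an unbounded way — e.g.\ $\PSL_2(p)$ does not sit inside $\PSL_2(p')$ as an $\X$-subgroup for larger primes $p'$, and $\SL_2(p)$ Frobenius-type configurations $G_0 \ltimes N$ have $|G_0|$ bounded once the non-soluble composition factor is fixed — so an ascending chain of finite $\X$-subgroups of $G$ must eventually stabilize its non-abelian socle, at which point $C_G$ of that socle is forced into it and $G$ becomes finite. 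Making the bookkeeping here clean (rather than case-by-case over the list in Theorem~\ref{one}) is the part that needs care; everything after that is a direct appeal to Theorem~\ref{t:soluble}.
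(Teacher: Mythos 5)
Your overall strategy---show $G$ is soluble and then quote Theorem~\ref{t:soluble}, whose locally finite members are precisely the three types in the statement---is viable, and your Step 2 together with the ``if'' direction (Lemmas~\ref{l:2adic} and \ref{l:padic}, plus the fact that $\Z_{p^\infty}$ is an $\X$-group) is fine. The genuine gap is Step 1, which is the crux and is only sketched. You never actually prove that an infinite locally finite $\X$-group has no non-soluble finite subgroup: the claim that an ascending chain of finite non-soluble $\X$-subgroups must stabilize requires a concrete containment analysis over the groups in Theorem~\ref{one}(3.1.2.7), (4.1) and (4.2), and one of the facts you invoke to wave it through is false as stated. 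Indeed $\PSL_2(5)\cong\SL_2(4)$ \emph{does} embed as a proper $\X$-subgroup of larger groups on the list: in $\PSL_2(9)$, in $\Mat(10)$, and in $\PSL_2(p')$ for suitable Mersenne primes $p'\equiv\pm1\pmod 5$ (for instance $p'=8191$), so chains such as $\Alt(5)<\Alt(6)<\Mat(10)$ really occur; what must be verified (via Lagrange, the unique involution of $\SL_2(p)$ against the Klein four subgroups of the other groups, and Dickson's list of subgroups of $\PSL_2(p)$) is that all such chains have bounded length. Similarly, in the second half of Step 1 the soluble finite $\X$-groups are not confined to parts (1) and (2) of Theorem~\ref{one}: part (3) also contains soluble groups ($\Sym(4)$, $\SL_2(3)$, $\SL_2(3)\udot 2$, and the Frobenius groups $G_0\ltimes N$ with $G_0$ soluble), so the uniform bound on derived length has to be checked over these as well; it does hold, but your argument as written does not establish it.

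For comparison, the paper's proof avoids Theorem~\ref{one} entirely and is much shorter: every abelian subgroup of an infinite locally finite $\X$-group is finite or isomorphic to $\Z_{p^\infty}$, hence satisfies the minimal condition, so by \v Sunkov's theorem $G$ is a \v Cernikov group; its divisible part $A$ is then an infinite abelian $\X$-group, hence $A\cong\Z_{p^\infty}$ is normal of finite index; since every proper subgroup of $A$ is finite cyclic and characteristic in $A$, with abelian automorphism group, and since $C_G(A)\le A$ because $A$ is non-cyclic, one gets $G'\le A$, so $G$ is metabelian and Theorem~\ref{t:soluble} finishes the proof. If you wish to keep your local approach you must supply the omitted case-by-case bookkeeping (which also makes the result depend on the full finite classification, including the dihedral/semidihedral Sylow $2$-subgroup classification behind Theorem~\ref{one}); otherwise the \v Sunkov route is the cleaner repair.
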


\begin{proof} Any abelian subgroup of $G$ is either finite or isomorphic to $\Z_{p^\infty}$ for some prime $p$, so it satisfies the minimal condition on subgroups. Thus $G$ is a \v Cernikov group by a result due to \v Sunkov (see, for instance  \cite[page 436, I]{Rob96}). Hence there exists an abelian normal subgroup $A$ of $G$ such that $A\cong \Z_{p^\infty}$ for some prime $p$, and $G/A$ is finite. It follows that $G$ is metabelian, arguing as in the proof of Theorem~\ref{t:soluble}. Therefore the result follows from Theorem~\ref{t:soluble}.
\end{proof}

Clearly Theorem~\ref{one} and Theorem~\ref{t:locallyfinite} give a complete classification of locally finite $\X$-groups.
\begin{corollary}
\label{c:locallynilpotent}
Let $G$ be an infinite locally nilpotent group. Then $G$ is an $\X$-group if and only if one of the following holds:
\begin{enumerate}
\item $G$ is cyclic;
\item $G\cong\Z_{p^\infty}$ for some prime $p$;
\item $G=A\langle y\rangle$ where $A\cong\Z_{2^\infty}$ and $\langle y\rangle$ has order $2$ or $4$, with $y^2\in A$ and $a^y=a^{-1}$, for all $a\in A$.
\end{enumerate}
\end{corollary}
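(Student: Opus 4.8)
The plan is to read the corollary off from Theorem~\ref{t:locallyfinite} via a case split on whether $G$ is periodic, using throughout that subgroups of locally nilpotent groups are again locally nilpotent and that a finitely generated locally nilpotent group is nilpotent (hence finite if periodic, and with nontrivial centre if nontrivial). The reverse implication is immediate: the infinite cyclic group and $\Z_{p^\infty}$ are abelian $\X$-groups by \cite[Theorem~2.2]{Del13}, the group in~(iii) is an $\X$-group by Lemma~\ref{l:2adic}, and all three are locally nilpotent --- the first two because they are abelian, and the third because every element has $2$-power order, so every finitely generated subgroup is a finite $2$-group.

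For the forward implication, first suppose $G$ is periodic. Then every finitely generated subgroup is a finitely generated periodic nilpotent group, hence finite, so $G$ is locally finite and Theorem~\ref{t:locallyfinite} applies: $G$ is isomorphic to $\Z_{p^\infty}$, to the group in~(iii), or to $A\rtimes D$ with $A\cong\Z_{p^\infty}$, $p$ odd and $1\ne D\le C_{p-1}$. I would eliminate the last possibility by showing it is not locally nilpotent. A nontrivial $d\in D$ acts fixed-point-freely on $A$ (this is established inside the proof of Lemma~\ref{l:padic}); taking any $1\ne a\in A$, the subgroup $\langle a,d\rangle=\langle a\rangle\langle d\rangle$ is finite (since $\langle a\rangle$, being the unique subgroup of its order in $A$, is normal in $G$), nonabelian, and generated by two elements of coprime orders, so it is not nilpotent. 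This leaves parts~(ii) and~(iii).

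Now suppose $G$ is not periodic and fix $x\in G$ of infinite order. Choose, by Zorn's lemma, a maximal abelian subgroup $A$ of $G$ with $x\in A$. Since $A$ is an abelian $\X$-group containing an element of infinite order, \cite[Theorem~2.2]{Del13} forces $A$ to be infinite cyclic, and maximality gives $C_G(A)=A$, so $N_G(A)/A$ embeds in $\Aut(A)$, a group of order $2$. The decisive step is that $N_G(A)>A$ unless $A=G$: if $g\in G\setminus A$, then $M=\langle x,g\rangle$ is finitely generated, hence nilpotent, so $A=\langle x\rangle$ is a proper subnormal subgroup of $M$ and therefore has strictly larger normaliser in $M$, yielding an element of $N_G(A)\setminus A$. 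So if $A\ne G$ then $|N_G(A):A|=2$, and $N_G(A)=\langle x\rangle\rtimes\langle h\rangle$ with $h$ inverting $x$; a short computation using $h^2\in A$ and $A$ torsion-free gives $h^2=1$, so $N_G(A)$ is infinite dihedral. But the infinite dihedral group is finitely generated with trivial centre, hence not nilpotent, contradicting the local nilpotence of its overgroup $G$. Therefore $A=G$, so $G$ is infinite cyclic, which is part~(i).

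I expect essentially all of the difficulty to be in the non-periodic case, precisely because Theorem~\ref{t:locallyfinite} does not reach it (an infinite cyclic group is not locally finite), and within that case in the normaliser-growth step: it is what forces a non-periodic locally nilpotent $\X$-group down to $\Z$ by ruling out the infinite dihedral group, and more generally any proper overgroup of a self-centralising infinite cyclic subgroup. The periodic case and the converse are routine bookkeeping against the earlier classifications.
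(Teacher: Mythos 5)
Your proof is correct, and in places it takes a genuinely different route from the paper's. The paper argues: if $G$ is non-abelian, every finitely generated subgroup is nilpotent, hence (being an $\X$-group) abelian or finite, so elements of infinite order are central; it then quotes \cite[Proposition 1]{Del07} to conclude that $G$ is periodic, writes the periodic locally nilpotent group as a direct product of $p$-groups, notes that only one prime can occur in an $\X$-group, and finishes by applying Theorem~\ref{t:locallyfinite} to the resulting locally finite $p$-group (the abelian case being \cite[Theorem 2.2]{Del13}). You instead split on periodicity: in the periodic case you go straight to Theorem~\ref{t:locallyfinite} and eliminate its case (iii) by exhibiting the finite non-abelian subgroup $\langle a\rangle\langle d\rangle$ generated by elements of coprime orders (the paper's route excludes that case because its periodic group is a $p$-group), while in the non-periodic case your argument --- a maximal abelian subgroup $A$ is self-centralizing infinite cyclic, the normalizer condition in the nilpotent subgroup generated by a generator of $A$ and any outside element forces $N_G(A)>A$, and then $N_G(A)$ would be infinite dihedral, which is not nilpotent --- is self-contained and avoids both the citation to \cite{Del07} and the fact that infinite nilpotent $\X$-groups are abelian, at the cost of a slightly longer computation. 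Two cosmetic repairs: write $A=\langle x_0\rangle$ for a generator $x_0$ and form $M=\langle x_0,g\rangle$ (your $x$ may be a proper power of the generator, so $A=\langle x\rangle$ is not quite right as written), and in the converse the inference ``every element has $2$-power order, so every finitely generated subgroup is a finite $2$-group'' should be backed by the observation that the group in (iii) is abelian-by-finite, hence locally finite (the inference from exponent alone is false in general); with these touch-ups the argument is complete.
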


\begin{proof}
Suppose $G$ is not abelian. Every finitely generated subgroup of $G$ is nilpotent, so it is either abelian or finite. It easily follows that all torsion-free elements of $G$ are central. Thus $G$ is periodic (see \cite[Proposition 1]{Del07}). Therefore $G$ is direct product of $p$-groups (see, for instance, \cite[Proposition~12.1.1]{Rob96}). Actually only one prime can occur since $G$ is an $\X$-group, so $G$ is a locally finite $p$-group. Thus the result follows by Theorem~\ref{t:locallyfinite}.
\end{proof}

\section*{References}

\bibliography{mybibfile}

\end{document}